\documentclass{amsart}
\usepackage{amsfonts}
\usepackage{amsthm}
\usepackage{amsmath}
\usepackage{amsfonts}
\usepackage{latexsym}
\usepackage{amssymb}
\usepackage[latin1]{inputenc}
\usepackage{verbatim}

\newcommand{\U}{\mathcal{U}}

\newcommand{\C}{\mathcal{C}}

\newcommand{\RR}{\mathbb{R}}
\newcommand{\NN}{\mathbb{N}}

\newtheorem{fed}{Definition}[section]
\newtheorem{teo}[fed]{Theorem}
\newtheorem*{teo*}{Theorem}
\newtheorem{lem}[fed]{Lemma}
\newtheorem{cor}[fed]{Corollary}
\newtheorem{pro}[fed]{Proposition}
\theoremstyle{definition}
\newtheorem{rem}[fed]{Remark}

\newtheorem{conj}[fed]{Conjecture}

\def\cA{\mathcal{A}}

\def\cC{\mathcal{C}}
\def\cD{\mathcal{D}}

\def\cP{\mathcal{P}}

\DeclareMathOperator{\Tr}{Tr} \DeclareMathOperator{\tr}{tr}

\newcommand{\op}{L(\mathcal{H})}

\newcommand{\CC}{\mathbb{C}}

\newcommand{\mat}{\mathcal{M}_n(\mathbb{C})}
\newcommand{\matsa}{\mathcal{M}_n(\CC)^{sa}}
\newcommand{\matpos}{\mathcal{M}_n(\CC)^+}

\begin{document}

\title[NC Schur-Horn theorems and extended majorization]{Non-commutative Schur-Horn theorems and extended majorization
for Hermitian matrices}

\author[Pedro G. Massey]{Pedro G. Massey}

\dedicatory{  A Marina, con amor}
\date{}
 \thanks{ Partially supported by a pdf from PIMS, CONICET (PIP
4463/96), Universidad de La Plata (UNLP 11 X350) and ANPCYT
(PICT03-09521).}

\begin{abstract} Let $\mathcal A\subseteq \mat$ be a unital $*$-subalgebra of the algebra
$\mat$ of all $n\times n$ complex matrices and let $B$ be an
hermitian matrix. Let $\U_n(B)$ denote the unitary orbit of $B$ in
$\mat$ and let $\mathcal E_\mathcal A$ denote the trace preserving
conditional expectation onto $\mathcal A$. We give an spectral
characterization of the set $$ \mathcal E_\mathcal
A(\U_n(B))=\{\mathcal E_\mathcal A(U^* B \,U):\ U\in \mat,\ \text{unitary  matrix}\}.$$ We obtain a similar result for the contractive
orbit of a positive semi-definite matrix $B$. We then use these
results to extend the notions of majorization and submajorization
between self-adjoint matrices to spectral relations that come
together with extended (non-commutative) Schur-Horn type theorems.
\end{abstract}

\maketitle

\noindent{\bf Keywords.} Extended majorization, non-commutative
Schur-Horn theorems, diagonal block compressions, partial traces,
unitary orbit.

\medskip

\noindent{\bf 2000 AMS Subject Classification}: Primary 15A24,
15A42.

\section{Introduction}
The Schur-Horn theorem states (\cite{Horn54,Schur23}), roughly
speaking, that the necessary and sufficient conditions on two
vectors $x,\, y\in \RR^n$ for the existence of an hermitian matrix
$A$ with spectrum (counting multiplicities) $y$ and main diagonal
$x$ are a finite set of linear inequalities involving the entries of
$x$ and $y$. This result was the starting point for the work
of Konstant \cite{Kons} on actions of compact Lie groups that was
subsequently extended to torus actions on symplectic manifolds by
Atiyah \cite{Ati}, and
 Guillemin and Sternberg \cite{GS} independently. Recently, there
 has been interest in some geometric aspects of the original result
 of Schur and Horn \cite{leite} which turn out to have also implications in frame
 theory \cite{RM}. 
 
There have also been extensions of the Schur-Horn theorem to
infinite dimensions such as Neuman's work on approximate
diagonals of selfadjoint operators in $\op$, the work of Kadison
\cite{Kad0,Kad1} particularly on diagonals of projections in $\op$,
and the recent work of Arveson and Kadison \cite{arvkad} on
diagonals of trace class operators, where they also focus on a
possible extension of the Schur-Horn theorem to II$_1$ factors. A
weak version of the Arveson-Kadison conjecture is proved in
\cite{argmas}. Indeed, this exposition in strongly influenced by the
point of view of \cite{Kad0} and \cite{arvkad} of the Schur-Horn
theorem.

In \cite{LiPoon} C.K. Li and Y.T. Poon obtained an extension of the Schur-Horn theorem,
but in a different way. They found necessary and sufficient
spectral conditions on two $n\times n$ selfadjoint matrices $A,\, B$
for the existence of an $n\times n$ unitary matrix $U$ such that $A$
is the \emph{block} diagonal compression of $U^*BU$ with respect to
certain block decomposition of $U^*BU$. Notice that the Schur-Horn
theorem can be seen as a particular case of this problem, namely
when the block representation of $U^*BU$ is with respect to
1$\times$1 blocks. They showed that the
situation with these general block compressions is quite
different from that of the classical Schur-Horn theorem (see for example Proposition \ref{no conv} below). The
nature and the complexity of the necessary and sufficient spectral
conditions they found are related with Klyachko's
compatibility inequalities \cite{Klya}, which give necessary and
sufficient conditions on $(m+1)$ vectors $\lambda^i\in \RR^n$,
$0\leq i\leq m$ for the existence of $(m+1)$ $n\times n$ selfadjoint
matrices $A_i$ with spectrum $\lambda ^i$ for $0\leq i\leq m$ and
$A_0=A_1+\ldots +A_m$.

In this note we consider a systematic analysis of what we consider
non commutative Schur-Horn type theorems. These include the previous
work \cite{LiPoon} on block diagonal compressions of the unitary
orbit of an hermitian matrix, block diagonal compressions of the
contractive orbit of a positive semidefinite matrix (see Theorem
\ref{teo contrac sh}) and partial traces of the unitary orbit of
an hermitian matrix (see Theorem \ref{teo par trac}). Our approach
is based on the  work of Friedland \cite{fried} and Fulton
\cite{ful} that extend that of Klyachko \cite{Klya} on the spectrum
of the sum of hermitian operators. These results are unified in the
following theorem, which provides operator algebra versions of the
Schur-Horn theorem, in the sense of \cite{arvkad}. We use the
following notation: given $\lambda^i\in \RR^{d(i)}$ for $1\leq i\leq
m$ with $\sum_{i=1}^m d(i)=n$ then $[\lambda^i]_{i=1}^n\in \RR^n$
denotes the vector obtained by juxtaposition of the vectors
$\lambda^i$'s i.e. $\lambda=(\lambda^1_1,\ldots, \lambda^1_{d(1)},
\lambda^2_1,\ldots, \lambda^m_{d(m)}).$ See also sections
\ref{spreliminares} and \ref{noentiendo} for notations and
terminology.

\begin{teo*}[NC Schur-Horn]\label{mayo esp}
Let $\mathbf l=(d(i),c(i))_{i=1}^m\in (\NN^2)^m$ be such that
$\sum_{i=1}^md(i)\cdot c(i)=n$ and consider the unital
$*$-subalgebra $\mathcal A=\oplus_{i=1}^m \mathcal
M_{d(i)}(\CC)\otimes 1_{c(i)}\subseteq \mat$. Let $\mathcal
E_\mathcal A$ denote the trace preserving conditional expectation
onto $\mathcal A$.
\begin{enumerate}
\item\label{uit1} If $B\in \matsa$ then there exists $M_B(\mathcal A)\subset\RR^n$,
  that can be generated in terms of Klyachko's compatibility inequalities, $\mathbf l$ and $\lambda(B)$, such that
 \begin{equation*}\label{desc alg op mayo}
 \mathcal E_\mathcal A(\U_n(B))=\{ \oplus_{i=1}^m A_i\otimes 1_{c(i)}\in \mathcal A: \ [\lambda(A_i\otimes 1_{c(i)})]_{i=1}^m\in M_B(\mathcal A)\}.
 \end{equation*}
\item\label{uit2} If $B\in \matpos$ then there exists $M^w_B(\mathcal A)\subset(\RR_{\geq 0})^n$,
  that can be generated in terms of Klyachko's compatibility inequalities, $\mathbf l$ and $\lambda(B)$,
 such that
 \begin{equation*}\label{desc alg op submayo}
 \mathcal E_\mathcal A(\cC_n(B))=\{ \oplus_{i=1}^m A_i\otimes 1_{c(i)}\in \mathcal A:
  \ [\lambda(A_i\otimes 1_{c(i)})]_{i=1}^m\in M_B^w(\mathcal A)\}.
 \end{equation*} \end{enumerate}
\end{teo*}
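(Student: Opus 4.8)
The plan is to deduce the theorem from the spectral characterization of $\mathcal E_\mathcal A(\U_n(B))$ (and its contractive analogue) announced in the abstract, reducing everything to the special case where $\mathcal A$ has no multiplicity, i.e. $c(i)=1$ for all $i$, which is precisely the block-diagonal-compression situation treated by Li--Poon \cite{LiPoon}. So the first step is to record the reduction to multiplicity one. Writing $\mathcal A=\oplus_{i=1}^m \mathcal M_{d(i)}(\CC)\otimes 1_{c(i)}$, one checks that the trace preserving conditional expectation factors as $\mathcal E_\mathcal A=(\mathrm{id}\otimes \mathrm{tr}_{c(i)})$ on each summand composed with the block-diagonal compression onto $\mathcal B=\oplus_{i=1}^m \mathcal M_{d(i)}(\CC)\otimes \mathcal M_{c(i)}(\CC)$; equivalently, $A\otimes 1_{c(i)}\in\mathcal A$ lies in $\mathcal E_\mathcal A(\U_n(B))$ iff $A$ appears as the $i$-th partial trace (normalized) of some element of the block compression of a unitary conjugate of $B$. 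The key point is that the spectrum $\lambda(A_i\otimes 1_{c(i)})$ is just $\lambda(A_i)$ with each multiplicity scaled by $c(i)$, so describing the juxtaposed vector $[\lambda(A_i\otimes 1_{c(i)})]_{i=1}^m$ is the same data as describing $(\lambda(A_i))_{i=1}^m$.

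Next I would invoke the results of Friedland \cite{fried} and Fulton \cite{ful} (which extend Klyachko \cite{Klya}) that characterize, in terms of the finite list of Horn--Klyachko inequalities, exactly which tuples of spectra $(\mu^0,\mu^1,\dots)$ arise from a decomposition $C_0 = C_1+\cdots$ of hermitian matrices, together with the extension allowing prescribed eigenvalue \emph{inequalities} rather than equalities (the ``rectangular'' or inequality version needed to handle compressions and partial traces, where one controls $PU^*BUP$ for a projection $P$ rather than an exact sum). Concretely, for part \refe{uit1}: given $B\in\matsa$, the matrix $E_\mathcal A(U^*BU)$ is the compression of $U^*BU$ to the diagonal of $\mathcal B$ followed by the partial traces; using the Fulton/Friedland description of the spectrum of a diagonal block together with the spectrum of $B$ and the Schur--Horn-type interlacing for partial traces, one obtains the polytope $M_B(\mathcal A)\subset\RR^n$ of admissible juxtaposed spectra, cut out by finitely many Klyachko-type inequalities in the entries of $\lambda(B)$ and the numbers $d(i),c(i)$. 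The forward inclusion (every $E_\mathcal A(U^*BU)$ has spectrum in $M_B(\mathcal A)$) is the ``easy'' direction and follows from those spectral inequalities applied to $U^*BU$; the reverse inclusion (realizability) is obtained by running the Klyachko/Fulton constructions backwards, choosing the unitary $U$ so that the required block structure appears.

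For part \refe{uit2}, the argument is the same but applied to the contractive orbit $\mathcal C_n(B)=\{C^*BC:\ \|C\|\le 1\}$ of a positive semidefinite $B$; here one uses Theorem \ref{teo contrac sh} to pass from contractions to compressions (a contraction $C$ with $C^*C\le 1$ can be dilated, so $C^*BC$ is a compression of a unitary conjugate of $B\oplus 0$), which introduces the \emph{sub}majorization-type relaxation and forces the eigenvalues to stay in $(\RR_{\ge 0})^n$; the Fulton inequality version with inequalities (not equalities) then yields the polytope $M^w_B(\mathcal A)$. I expect the main obstacle to be bookkeeping the passage between the three equivalent pictures — partial traces, block compressions, and sums of hermitian matrices with prescribed spectra — and in particular verifying that the ``inequality'' form of the Horn--Klyachko inequalities (rather than the equality form for exact sums) is exactly what the compression/partial-trace operation produces; getting the multiplicities $c(i)$ to enter correctly into those inequalities, and checking that no inequalities are lost or spuriously added in the reduction, is the delicate part. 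Once that dictionary is set up cleanly, both inclusions are essentially a translation of Friedland's and Fulton's theorems into the operator-algebra language of \cite{arvkad}.
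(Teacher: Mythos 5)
Your proposal follows essentially the same route as the paper's proof: factor the trace preserving conditional expectation as the coordinate block compression $\C_\cP$ (onto $c=\sum_i c(i)$ blocks of sizes $d(i)$) followed by averaging the $c(i)$ sub-blocks within each summand, characterize the achievable sub-block spectra via Theorem \ref{teo KSH} (resp.\ Theorem \ref{teo contrac sh} for the contractive orbit), and then characterize the achievable averages by a second application of Theorem \ref{KlFF}. The one loose phrase is ``Schur--Horn-type interlacing for partial traces'': the normalized partial trace is $\frac{1}{c(i)}$ times the sum of the $c(i)$ diagonal sub-blocks of size $d(i)$, so the second step is again the Klyachko sum theorem \ref{KlFF}\,\eqref{itkly} (with the block-diagonal conjugating unitaries absorbed into $U$), which is why the paper's $M_B(\mathcal A)$ is presented as a projection, over the intermediate spectra $\mu$, of a Klyachko-type region rather than as a polytope cut out directly by inequalities in $\lambda(B)$ and the target spectrum alone.
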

\noindent If $\mathcal A$ is as in the statement of the
NC-Schur-Horn theorem above then $\mathcal E_\mathcal A$ can be
described as
 $$ \mathcal E_\mathcal A(X)= \oplus_{i=1}^m
\frac{1}{c(i)} \sum_{j=t(i)}^{t(i+1)-1} X_{j}\otimes 1_{c(i)} \, , \
\ \text{ with } \ \ \C_\cP(X)=\oplus_{i=1}^c\, X_i$$ where
$c=\sum_{i=1}^m c(i)$,
$k(\sum_{r=1}^ {i-1} c(r)+j)=d(i)$  for $1\leq i\leq m$ , $1
\leq j\leq c(i)$, $\cP=\{P_i \}_{i=1}^c$ is the system of coordinate (diagonal)
projections with rank$\,(P_i)$ $=k(i)$ for $1\leq i\leq c$ and $t(i)=\sum_{j=1}^{i-1}c(j)+1$ for $1\leq i\leq m$.

Notice that although the existence of the sets $M_B(\mathcal A)$ and
$M_B^w(\mathcal A)$ in the NC-Schur-Horn theorem is trivial, their
description is not. Actually, we think that one of the main points
of this note is to show a relation between Klyachko's compatibility
inequalities and the description of these sets. We point out that
in the special case $\mathbf l=(1,1)_{i=1}^n$ (and hence $\mathcal
A$ is the diagonal masa) using the reduction of the
complexity of Klyachko's inequalities obtained in \cite{LiPoon} the
Schur-Horn theorem is recovered in terms of majorization.

This finite dimensional operator algebra point of view is developed
to introduce an extension of majorization between selfadjoint
matrices as defined by Ando \cite{Ando} to that of extended
majorization between selfadjoint matrices. Since this last concept
involves some technical notions we postpone its detailed discussion
until section \ref{noentiendo}.

We also consider the relation of extended majorization with some
convex functionals. As in the case of usual majorization, the notion of extended majorization has
relations with ``signal processing" (\cite{JMD,dhi,RM}), but it seems that in this case
the word ``quantum" may be added. As an example of this last claim,
we obtain a result related with a conjecture posed by M.B. Ruskai and K. Audenaert in Quantum Information Theory (QIT).

\smallskip

\noindent {\bf Acknowledgments.} This note is the consequence of a
talk I gave in the Canadian Operator Symposium (COSy) at Guelph. For
that I would like to thank the organizers J. Holbrook and D. Kribs,
and the Fields Institute for funding support to attend this event. I
would also like to thank the people in the Math and Stats department at the
University of Regina for their kind hospitality during my PIMS pdf
there, particularly to M. Argerami, D. Farenick and S. Fallat.

\section{Preliminaries}\label{spreliminares}

\noindent {\bf Some notations and terminology}. We denote by $\mat$ (resp. $\matsa$,
$\matpos$, $\U(n)$) the set of $n\times n$ complex (resp
selfadjoint, positive semi-definite, unitary) matrices, with
identity $1_n$. By a system of projections $\cP=\{ P_i\}_{i=1}^m$ in
$\mat$ we mean an ordered set of $n\times n$ complex orthogonal
projection matrices such that $\sum_{i=1}^mP_i=1_n$ (thus, the ranges
of $P_1,\ldots,P_m$ are pairwise orthogonal). Given a system of
projections $\cP=\{ P_i\}_{i=1}^m$ in $\mat$
we consider the compression $\C_\cP:\mat\rightarrow \mat$  induced
by $\cP$ given by $\C_\cP(S)=\sum_{i=1}^mP_iS\,P_i$.  Notice that
$\C_\cP$ is a trace preserving completely positive map. We shall
consider $\oplus_{i=1}^m \mathcal M_{d(i)}(\CC)\subseteq \mat$ as a
unital $*$-subalgebra of $\mat$.   If
$x\in \RR^n$ then we denote by $x^\downarrow\in \RR^n$ the vector
obtained from $x$ by rearranging the coordinates of $x$ in
non-increasing order. If $A\in \matsa$ then
$\lambda(A)=\lambda(A)^\downarrow\in \RR^n$ denotes the $n$-tuple of eigenvalues of $A$
counting multiplicities and arranged in non-increasing order.
%En este documento...
If $S\in \mat$ then $\U_n(S)$, $\C_n(S)$ denote respectively the
unitary and contractive orbit of $S$ i.e. $\U_n(S)=\{U^*S\,U:\ U\in
\U(n)\}$, $\C_n(S)=\{V^*S\,V:\ V\in \mat, \ \|V\|\leq 1\}$. More
generally, $\U_n(\mathcal X)$, $\C_n(\mathcal X)$ denote the unitary
and contractive orbit of $\mathcal X\subseteq \mat$.
 We shall
denote the canonical basis of $\CC^n$ as $\{ e_i\}_{i=1}^n$. If
$\lambda\in \RR^n$ we denote by $\text{Diag}(\lambda)$ the diagonal
matrix with main diagonal $\lambda$. The set $\{1,\ldots,n\}$ is
denoted by $\langle n\rangle$. We denote by $\RR_{\geq 0}$ the set
of non-negative real numbers.

\subsection{Majorization in $\matsa$}

We begin by recalling the notion of vector majorization and
submajorization. If $x,\, y\in \RR^n$ then we say that $x$ is
submajorized by $y$, denoted $x\prec_w y$, if for $1\leq k\leq n$
then $\sum_{i=1}^k x^\downarrow_i\leq \sum_{i=1}^k y^\downarrow_i$.
If $x\prec _w y$ and moreover $\sum_{i=1}^n x_i=\sum_{i=1}^n y_i$,
we say that $x$ is majorized by $y$ and write $x\prec y$.
Vector majorization arises naturally in the theory of inequalities
between convex functionals. This notion is also related with the
so-called doubly-stochastic matrices. Finally, our main motivation
for the introduction of majorization is the fact that it describes
the relation between the spectrum and the main diagonal of an
hermitian matrix. Indeed we have
\begin{teo}[Schur-Horn]\label{prelis shvec}Let $x,\,y\in \RR^n$. Then, there exists an
hermitian (or real symmetric) matrix $A\in \matsa$ with main
diagonal $x$ and $\lambda(A)=y^\downarrow$ if and only if $x\prec y$.
\end{teo}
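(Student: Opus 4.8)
The plan is to prove the classical Schur-Horn theorem (Theorem \ref{prelis shvec}) in two directions. The easy direction is necessity: if $A\in\matsa$ has main diagonal $x$ and $\lambda(A)=y^\downarrow$, then $x\prec y$. First I would observe that the main diagonal of $A$ is obtained by the pinching $\C_\cP$ with respect to the diagonal masa, i.e.\ $\mathrm{Diag}(x)=\C_\cP(A)$ where $\cP=\{P_i\}_{i=1}^n$ is the system of rank-one coordinate projections. The key point is that $\C_\cP$ is a trace-preserving, unital, completely positive (in particular, positive and doubly stochastic) map, so $\mathrm{Tr}(\mathrm{Diag}(x))=\mathrm{Tr}(A)$, which gives $\sum_i x_i=\sum_i y_i$. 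For the majorization inequalities, fix $k$ and let $Q$ be the spectral projection of $A$ onto its $k$ largest eigenvalues, so $\mathrm{Tr}(QA)=\sum_{i=1}^k y^\downarrow_i$. On the other hand, $\sum_{i=1}^k x^\downarrow_i=\max\{\mathrm{Tr}(RA): R\ \text{a sum of}\ k\ \text{coordinate projections}\}$, and for any rank-$k$ projection $R$ we have $\mathrm{Tr}(RA)\le \sum_{i=1}^k y^\downarrow_i$ by the variational characterization of eigenvalues (Ky Fan's maximum principle). Hence $\sum_{i=1}^k x^\downarrow_i\le\sum_{i=1}^k y^\downarrow_i$, establishing $x\prec y$.

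For sufficiency, assume $x\prec y$; I want to build $A\in\matsa$ with diagonal $x$ and eigenvalues $y^\downarrow$. The cleanest route is the classical argument via a finite chain of $2\times 2$ rotations (a "T-transform" or Givens-rotation argument). Start with $A_0=\mathrm{Diag}(y^\downarrow)$, which has the right spectrum but diagonal $y^\downarrow$; the goal is to conjugate by a product of unitaries, each supported on a coordinate $2$-plane, so that the spectrum is preserved while the diagonal is steered to $x$. The core lemma is: if $d$ is the current diagonal and $d\ne x$ but $x\prec d$ (equivalently $x$ is a convex combination of permutations of $d$, by the Hardy–Littlewood–Pólya / Birkhoff theorem), then one can pick indices $p,q$ with $d_p \ge x_{\pi(p)}\ge x_{\pi(q)}\ge d_q$ for a suitable ordering, and a rotation in the $(p,q)$-plane by an angle $\theta$ that changes only the $p$th and $q$th diagonal entries — replacing $(d_p,d_q)$ by $(d_p\cos^2\theta + d_q\sin^2\theta,\ d_p\sin^2\theta+d_q\cos^2\theta)$ — so that after the move at least one more diagonal entry equals the corresponding entry of $x$ and the new diagonal still majorizes... wait, is still majorized by the old one, hence still sits above $x$. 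Iterating at most $n-1$ times pins all coordinates of the diagonal to $x$, and since every step was a unitary conjugation the spectrum remains $y^\downarrow$. Finally, conjugating by a permutation matrix rearranges the diagonal into the prescribed order $x$ (not just $x^\downarrow$), which completes the construction; the real-symmetric version follows since all rotations and permutations used are real orthogonal.

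The main obstacle is the inductive step in the sufficiency direction: one must verify both that a valid pair of indices $(p,q)$ and a valid angle $\theta\in[0,\pi/2]$ always exist when $x\prec d$ and $d\ne x$, and that the updated diagonal $d'$ still satisfies $x\prec d'$ with strictly more coordinates agreeing with $x$, so that the process terminates. The bookkeeping here — choosing the indices so that one coordinate is "locked in" without disturbing the already-fixed coordinates, and checking the majorization is preserved — is the heart of the matter; everything else (the necessity direction, the reduction to a pinching, the Ky Fan principle) is essentially immediate from standard linear algebra. An alternative to the rotation argument would be to invoke Horn's original inductive proof on $n$, peeling off one eigenvalue at a time using an intermediate-value/compactness argument to realize the required interlacing, but the T-transform proof is more self-contained and makes the doubly-stochastic structure transparent.
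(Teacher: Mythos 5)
The paper does not actually prove Theorem \ref{prelis shvec}: it is quoted as the classical Schur--Horn theorem with references to \cite{Horn54,Schur23}, so there is no in-paper argument to compare against. Judged on its own, your proposal follows the standard route --- Ky Fan's maximum principle for necessity, a Givens-rotation (T-transform) construction for sufficiency --- and the necessity half is complete and correct as written.

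In the sufficiency half there is one point where the argument, as stated, would fail if implemented naively. The update rule you give for the diagonal under a rotation in the $(p,q)$-plane, namely $(d_p,d_q)\mapsto(d_p\cos^2\theta+d_q\sin^2\theta,\ d_p\sin^2\theta+d_q\cos^2\theta)$, is valid only when the current $(p,q)$ entry of the matrix is zero; in general, conjugation by a plane rotation adds a cross term $\pm 2\cos\theta\sin\theta\,\mathrm{Re}\,a_{pq}$ to the two affected diagonal entries. After the first rotation the matrix is no longer diagonal, so one cannot track an arbitrary chain of T-transforms carrying $y^\downarrow$ to $x$ at the level of vectors and mimic it by rotations at the level of matrices. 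The standard repair (Chan--Li, and essentially what \cite{dhi} implements) is to organize the induction so that every rotation acts inside a principal block that is still diagonal: pick $k$ with $\lambda_k\ge x_1\ge\lambda_{k+1}$ (such $k$ exists from $x\prec y$), rotate in the $(k,k+1)$-plane of $\mathrm{Diag}(y^\downarrow)$ so that one diagonal entry becomes $x_1$ and the other becomes $\lambda_k+\lambda_{k+1}-x_1$, verify that $(x_2,\dots,x_n)$ is majorized by the remaining $n-1$ diagonal entries, and recurse on the complementary $(n-1)\times(n-1)$ principal submatrix, which is again diagonal; the off-diagonal entries created at each step live in the frozen row and column and are only acted on from one side by the later orthogonal matrices, so the locked diagonal entries are never disturbed. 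You gesture at exactly this (``locking in'' a coordinate without disturbing fixed ones), but the existence of the index pair and the preservation of majorization after the update are the real content of the lemma and must be checked; with that supplied the proof is correct, and the interlacing induction of Horn that you mention as an alternative is an equally valid fallback.
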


Ando extended in \cite{Ando} the notion of vector (sub)majorization
to that of (sub)majori\-zation between elements in $\matsa$ i.e. the
real vector space of hermitian matrices. Indeed, given $A,\,B\in
\matsa$ we say that $A$ is majorized (resp submajorized) by $B$,
denoted $A\prec B$ (resp $A\prec _w B$) if $\lambda(A)\prec
\lambda(B)$ (resp $\lambda(A)\prec_w \lambda(B)$).

Majorization between hermitian matrices (operators) is also related
with inequalities of convex functionals, doubly-stochastic maps and
the values of conditional expectations onto maximal abelian
subalgebras of $\mat$. In order to state the next result, in which
we summarize some well known facts, we introduce the following
terminology and notations. Recall that a doubly-stochastic map
$T:\mat\rightarrow \mat$ is a linear map such that $T(1)=1$
(unital), $T(C)\geq 0$ whenever $C\geq 0$ (positive) and such that
$\tr(T(X))=\tr(X)$ for every $X\in \mat$ (trace preserving). We define $\mathcal
E_\mathcal D:\mat\rightarrow \mat$ such that, for
$X=(x_{ij})_{ij}\in \mat$ then $\mathcal E_\mathcal
D(X)=\text{Diag}(x_{11},\ldots,x_{nn})$.  A
particularly important example of a doubly stochastic map is given
by $T(X)=\mathcal E_\cD(U^*XU)$ for a fixed unitary matrix $U$.
Notice that the Schur-Horn
theorem \ref{prelis shvec} can we re-stated as
\begin{equation}\label{rees sh} \{ \mathcal E_\mathcal
D(U^*\text{Diag}(y)\,U):\ U\in \U(n)\}=\{ \text{Diag}(x):x\in
\RR^n\, , \ x\prec y\}
\end{equation}
which is an spectral description of the set in the left-hand side of
the equality above. In what follows, we consider
$M_B(n)=\{\lambda\in \RR^n:\ \lambda\prec \lambda(B) \}\subseteq
\RR^n$.
\begin{teo}\label{prelimsh}
Let $A,\,B\in \matsa$. Then the following statements are equivalent:
\begin{enumerate}
\item\label{shc1} $\lambda(A)\in M_B(n)$ (or equivalently $A\prec B$).
\item For every convex function $f:\RR\rightarrow \RR$ we have
$\tr(f(A))\leq \tr(f(B))$.
\item There exists a doubly-stochastic map $T:\mat\rightarrow\mat$
such that $T(B)=A$.
\item\label{shc4} $\U_n(A)\cap \{ \mathcal E_\cD(U^* B\, U):\ U\in \U(n)\}\neq \emptyset$
or equivalently $$A\in \U_n( \{ \mathcal E_\cD(U^* B\, U):\ U\in
\U(n)\}).$$
\end{enumerate}
\end{teo}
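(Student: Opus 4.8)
The plan is to establish the four statements as equivalent by showing $(\mathrm{i})\Leftrightarrow(\mathrm{ii})$, $(\mathrm{i})\Leftrightarrow(\mathrm{iv})$ and $(\mathrm{i})\Leftrightarrow(\mathrm{iii})$, with condition $(\mathrm{i})$ playing the role of a hub. The common mechanism is that each of the four conditions can be reduced to a statement about the vectors $\lambda(A)$ and $\lambda(B)$, together with the identity $\tr(f(A))=\sum_{i=1}^n f(\lambda_i(A))$ valid for every $f\colon\RR\to\RR$.

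For $(\mathrm{i})\Leftrightarrow(\mathrm{ii})$: the implication $(\mathrm{i})\Rightarrow(\mathrm{ii})$ is the classical Hardy--Littlewood--P\'olya inequality $x\prec y\Rightarrow\sum_i f(x_i)\le\sum_i f(y_i)$ for convex $f$, applied to $x=\lambda(A)$, $y=\lambda(B)$. For $(\mathrm{ii})\Rightarrow(\mathrm{i})$ one tests $(\mathrm{ii})$ against the convex functions $f(t)=\pm t$, obtaining $\tr A=\tr B$, and against $f_s(t)=(t-s)_+$ for $s\in\RR$, obtaining $\sum_i(\lambda_i(A)-s)_+\le\sum_i(\lambda_i(B)-s)_+$; evaluating at $s=\lambda_k(B)$ yields $\sum_{i=1}^k\lambda_i(A)\le\sum_{i=1}^k\lambda_i(B)$ for $1\le k\le n$, i.e. $\lambda(A)\prec\lambda(B)$, so $\lambda(A)\in M_B(n)$. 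For $(\mathrm{i})\Leftrightarrow(\mathrm{iv})$: writing $B=V\,\text{Diag}(\lambda(B))\,V^*$ with $V\in\U(n)$ and noting that $V^*U$ runs over $\U(n)$ as $U$ does, one has $\{\mathcal E_\cD(U^*BU):U\in\U(n)\}=\{\mathcal E_\cD(U^*\text{Diag}(\lambda(B))U):U\in\U(n)\}=\{\text{Diag}(x):x\in\RR^n,\ x\prec\lambda(B)\}$ by the restated Schur--Horn theorem $\refe{rees sh}$. Hence condition $(\mathrm{iv})$, i.e. $A\in\U_n(\{\text{Diag}(x):x\prec\lambda(B)\})$, holds iff $\lambda(A)=x^\downarrow$ for some $x\prec\lambda(B)$, which, since majorization depends only on the decreasing rearrangement, is exactly $\lambda(A)\prec\lambda(B)$.

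For $(\mathrm{i})\Rightarrow(\mathrm{iii})$: by $\refe{rees sh}$ there is $U\in\U(n)$ with $\mathcal E_\cD(U^*\text{Diag}(\lambda(B))U)=\text{Diag}(\lambda(A))$; writing also $A=W\,\text{Diag}(\lambda(A))\,W^*$ with $W\in\U(n)$ and $B=V\,\text{Diag}(\lambda(B))\,V^*$ as above, set $T(X)=W\,\mathcal E_\cD(U^*V^*XVU)\,W^*$. Conjugation by a unitary matrix and $\mathcal E_\cD$ are each unital, positive and trace preserving, hence so is the composition $T$, and $T(B)=W\,\text{Diag}(\lambda(A))\,W^*=A$. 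For $(\mathrm{iii})\Rightarrow(\mathrm{i})$: let $T^*$ be the adjoint of $T$ with respect to the trace inner product; then $T^*$ is again unital, positive and trace preserving. Fix $1\le k\le n$ and choose, by Ky Fan's maximum principle, a rank-$k$ projection $P$ with $\tr(AP)=\sum_{i=1}^k\lambda_i(A)$. Then $Q:=T^*(P)$ satisfies $0\le Q\le T^*(1_n)=1_n$ and $\tr Q=\tr P=k$, whence $\sum_{i=1}^k\lambda_i(A)=\tr(T(B)P)=\tr(B\,Q)\le\max\{\tr(BR):0\le R\le 1_n,\ \tr R=k\}=\sum_{i=1}^k\lambda_i(B)$; combined with $\tr A=\tr T(B)=\tr B$ this gives $\lambda(A)\prec\lambda(B)$.

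The only genuinely delicate point is the argument for $(\mathrm{iii})\Rightarrow(\mathrm{i})$: one has to observe that the trace adjoint of a doubly-stochastic map is again doubly stochastic, and to use the two variational descriptions $\sum_{i=1}^k\lambda_i(X)=\max\{\tr(XP):P\text{ a projection of rank }k\}=\max\{\tr(XR):0\le R\le 1_n,\ \tr R=k\}$, the second equality holding because the extreme points of the compact convex set $\{R:0\le R\le 1_n,\ \tr R=k\}$ are exactly the rank-$k$ projections. Everything else is a routine consequence of the Schur--Horn theorem $\refe{prelis shvec}$ and standard facts about vector majorization.
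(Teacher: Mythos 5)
Your proof is correct. The paper itself offers no proof of Theorem \ref{prelimsh} --- it is stated explicitly as a summary of well-known facts --- so there is no argument of the author's to compare against; your chain of implications (the Hardy--Littlewood--P\'olya inequality with the test functions $(t-s)_+$, the reduction of (iv) to \refe{rees sh}, the explicit doubly-stochastic map $X\mapsto W\,\mathcal E_\cD(U^*V^*XVU)\,W^*$, and the Ky Fan argument via the trace adjoint $T^*$) is the standard one and each step checks out.
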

We refer to the equivalence between \eqref{shc1} and \eqref{shc4} in
Theorem \ref{prelimsh} as the commutative (since $\mathcal
E_\cD(\mat)$ is a commutative unital $*$-subalgebra of $\mat$)
operator algebra version of the Schur-Horn theorem, which is an
spectral description of the relation in \eqref{shc4}.

There is a similar result for sub-majorization. But in order to get
a complete analogy with Theorem \ref{prelimsh}, we have to restrict
our attention to submajorization between positive semi-definite
matrices. Recall that a
doubly sub-stochastic map $T:\mat\rightarrow \mat$ is a positive
linear map such that $\tr(T(X))\leq \tr(X)$ for $X\in \mat$ (trace
reducing) and $T(1_n)\leq 1_n$ (sub-unital). A particularly
important example of a doubly sub-stochastic map is given by
$T(X)=\mathcal E_\cD(V^*XV)$ for a fixed contraction $V$. In what
follows, we consider $M^w_B(n)=\{\lambda\in (\RR_{\geq 0})^n:\
\lambda\prec_w \lambda(B) \}\subseteq \RR^n$ for $B\in \matpos$.

\begin{teo}\label{prelimcsh}
Let $A,\,B\in \matpos$. Then the following statements are
equivalent:
\begin{enumerate}
\item\label{cshc1} $\lambda(A)\in M_B^w(n)$ (or equivalently $A\prec_w B$).
\item For every convex non-decreasing function $f:[0,\infty)\rightarrow \RR$ we have
$\tr(f(A))\leq \tr(f(B))$.
\item There exists a doubly sub-stochastic map $T:\mat\rightarrow\mat$
such that $T(B)=A$.
\item\label{cshc4} $\U_n(A)\cap \{ \mathcal E_\cD(V^* B\, V):\ V\in \mat, \ \|V\|\leq 1\}\neq \emptyset$
or equivalently $$A\in \U_n( \{ \mathcal E_\cD(V^* B\, V):\ V\in
\mat, \ \|V\|\leq 1\}).$$
\end{enumerate}
\end{teo}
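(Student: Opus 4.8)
The plan is to follow the pattern of Theorem~\ref{prelimsh}, systematically replacing doubly-stochastic data by doubly sub-stochastic data and ordinary majorization by submajorization. Concretely I would establish the cycle \eqref{cshc1}$\Rightarrow$\eqref{cshc4}$\Rightarrow$(iii)$\Rightarrow$\eqref{cshc1} together with the purely vectorial equivalence \eqref{cshc1}$\Leftrightarrow$(ii). For that equivalence, the implication $\Rightarrow$ is the classical fact that $x\prec_w y$ forces $\sum_i f(x_i)\le\sum_i f(y_i)$ for every convex non-decreasing $f$, while for $\Leftarrow$ I would test (ii) against the functions $f_t(s)=\max(s-t,0)$ with $t\ge0$, which are convex non-decreasing on $[0,\infty)$; taking $t$ just below $\lambda_k(B)$ then recovers $\sum_{i=1}^k\lambda_i(A)\le\sum_{i=1}^k\lambda_i(B)$ for each $k$.

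For \eqref{cshc1}$\Rightarrow$\eqref{cshc4}: starting from $\lambda(A)\prec_w\lambda(B)$, I would first invoke the standard majorization lemma to obtain a vector $z=z^\downarrow\in\RR^n$ with $\lambda_i(A)\le z_i$ for all $i$ and $z\prec\lambda(B)$ (so $z\ge0$). By the commutative operator Schur--Horn theorem \eqref{rees sh} there is a unitary $W$ with $\mathcal E_\cD(W^*B\,W)=\text{Diag}(z)$. Next set $V_0=\text{Diag}(s_1,\dots,s_n)$ with $s_i=(\lambda_i(A)/z_i)^{1/2}$ if $z_i>0$ and $s_i=0$ otherwise; since $0\le\lambda_i(A)\le z_i$, this $V_0$ is a contraction and the $(i,i)$-entry of $V_0^*W^*B\,W\,V_0$ equals $s_i^2 z_i=\lambda_i(A)$, so that $\mathcal E_\cD\big((WV_0)^*B\,(WV_0)\big)=\text{Diag}(\lambda(A))$. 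Finally, conjugating by a unitary that diagonalizes $A$ produces a unitary $U$ and a contraction $V$ with $U^*A\,U=\mathcal E_\cD(V^*B\,V)$, which is \eqref{cshc4}.

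The remaining two implications are short. For \eqref{cshc4}$\Rightarrow$(iii), from $U^*A\,U=\mathcal E_\cD(V^*B\,V)$ I would put $T(X)=U\,\mathcal E_\cD(V^*X\,V)\,U^*$; this map is positive, it is sub-unital because $T(1_n)=U\,\mathcal E_\cD(V^*V)\,U^*\le1_n$ (as $V^*V\le1_n$), and for $X\ge0$ one has $\tr(T(X))=\tr(VV^*X)\le\tr(X)$ (as $0\le VV^*\le1_n$), so $T$ is doubly sub-stochastic with $T(B)=A$. For (iii)$\Rightarrow$\eqref{cshc1}, let $T$ be doubly sub-stochastic with $T(B)=A$; since $T$ is positive and $B\ge0$, also $A\ge0$. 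The trace-adjoint $T^*$ is again doubly sub-stochastic, and by Ky Fan's extremal principle $\sum_{i=1}^k\lambda_i(A)=\max\{\tr(A\,Q):0\le Q\le1_n,\ \tr(Q)\le k\}$ for $1\le k\le n$. For any such $Q$, the element $R:=T^*(Q)$ satisfies $0\le R\le T^*(1_n)\le1_n$ and $\tr(R)\le\tr(Q)\le k$, hence $\tr(A\,Q)=\tr(B\,R)\le\sum_{i=1}^k\lambda_i(B)$ since $B\ge0$; maximizing over $Q$ gives $\lambda(A)\prec_w\lambda(B)$.

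I expect the only genuine obstacle to be \eqref{cshc1}$\Rightarrow$\eqref{cshc4}, the sole place where a contraction must be built by hand. The key device there is to realize an intermediate vector $z$ (with $\lambda(A)\le z$ and $z\prec\lambda(B)$) as a diagonal compression of the unitary orbit of $B$ via the already-available commutative Schur--Horn theorem \eqref{rees sh}, and only afterwards to shrink each diagonal entry down to $\lambda_i(A)$ by a diagonal contraction; everything else is either a routine verification or a standard fact about vector submajorization.
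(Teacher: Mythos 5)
The paper does not actually prove Theorem \ref{prelimcsh}: it appears in the preliminaries as a summary of ``well known facts'' and is stated without argument, so there is no proof of record to compare yours against. Your blind proof is correct and complete as a self-contained argument. The cycle \eqref{cshc1}$\Rightarrow$\eqref{cshc4}$\Rightarrow$(iii)$\Rightarrow$\eqref{cshc1} together with \eqref{cshc1}$\Leftrightarrow$(ii) covers all equivalences; the key step \eqref{cshc1}$\Rightarrow$\eqref{cshc4} correctly combines the standard lemma (if $x\prec_w y$ there is $z$ with $x\le z$ and $z\prec y$, and here $z\ge 0$ because $\lambda_n(B)\ge 0$) with the commutative Schur--Horn statement \eqref{rees sh} and a diagonal contraction, which is exactly the kind of reduction the paper itself uses elsewhere (e.g.\ in the proof of Theorem \ref{teo contrac sh}, where contractions are built from square roots and positivity). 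The test functions $f_t(s)=(s-t)^+$ with $t=\lambda_k(B)\ge 0$ do recover the partial-sum inequalities, and the duality argument via $T^*$ and the Ky Fan principle for positive matrices is sound. One small remark: the paper's literal definition of ``trace reducing'' ($\tr(T(X))\le\tr(X)$ for \emph{all} $X\in\mat$) would force trace preservation for a linear map; you implicitly read it as holding for positive $X$, which is the only sensible interpretation and the one consistent with the paper's own example $T(X)=\mathcal E_\cD(V^*XV)$, so your verification in \eqref{cshc4}$\Rightarrow$(iii) is the right one.
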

We refer to the equivalence between \eqref{cshc1} and \eqref{cshc4}
in Theorem \ref{prelimcsh} as the commutative contractive operator
algebra version of the Schur-Horn theorem, which is an spectral
description of the relation in \eqref{cshc4}.

\subsection{Klyachko's theory on sums of hermitian matrices}

We briefly describe some basic notions of Schubert varieties and
admissible $m$-tuples to state Theorem \ref{KlFF}. This result
summarizes the deep work of Klyachko \cite{Klya}, Friedland
\cite{fried} and Fulton \cite{ful}. For a detailed account on these
and related topics we refer the reader to \cite{ful2} and the
references therein.

 Let
$V_*=V_1\subset V_2\subset \cdots V_n=\CC^n$ be a complete flag on
$\CC^n$ i.e. $\dim(V_i)=i$ for $1\leq i\leq n$. Fix $1\leq r< n$ and
let $I=\{i_1,\ldots,i_r \}\subset\langle n\rangle$ with $1\leq
i_1<i_2<\cdots<i_r\leq n$. Denote
$$I'=\{i'_1,\ldots,i'_r\}, \ \ \ i'_j=n+1-i_{r+1-j}, \ \ j=1,\ldots,r.$$
Let $X=Gr(r,\CC^n)$ be the Grassmann variety of all $r$-dimensional
subspaces $L$ of $\CC^n$. Let $\Omega_I(V_*)$ be the \emph{Schubert
variety} in $X$ defined by $$ \Omega_I(V_*):=\{ L\in X:\ \dim(L\cap
V_{i_j})\geq j \ \text{ for } \ 1\leq j\leq r\}.$$ An $(m+1)$-tuple
$(I_0,\ldots,I_m)$ of subsets $I_0,\ldots,I_m$ of $\langle n\rangle$,
each of cardinality $r$ ($1\leq r<n$) is called \emph{admissible},
if for any $(m+1)$ complete flags $V_*^0,\ldots,V_*^m$ of $\CC^n$
the following condition holds: $$ \Omega_{I_0}(V_*^0)\cap \left(
\bigcap _{j=1}^m \Omega_{I_j '} (V_*^j)\right)\neq \emptyset.  $$ We
will use the following notations. Let $|J|$ denote the cardinal of
the set $J$ and let
$$x[I]:=\sum_{i\in I}x_i, \ \ x\in \RR^n, \ \ I\subseteq\langle n
\rangle,\ \ |I|\geq 1.$$
\begin{teo}[\cite{Klya,fried,ful}]\label{KlFF}
Let $(\lambda^i)_{i=0}^m\in(\RR^n)^{(m+1)}$ be an $(m+1)$-tuple of
vectors in $\RR^n$. Then
\begin{enumerate}
\item\label{itkly} There exist $m+1$ matrices $A_0,\ldots,A_m\in \matsa$
such that $\lambda(A_i)=\lambda^i$ for $0\leq i\leq m$ and
$A_0=\sum_{i=1}^m A_i$ if and only if $\lambda^0[\langle
n\rangle]=\sum_{i=1}^m\lambda^i[\langle n\rangle]$ and
\begin{equation}\label{comp K}
\lambda^0[I_0']\geq \sum_{j=1}^m\lambda^j[I_j'], \ \ \text{ for
every admissible} \ (m+1)\text{-tuple}\ (I_j)_{j=0}^m.\end{equation}
\item\label{iFF} There exist $m+1$ matrices $A_0,\ldots,A_m\in \matsa$
such that $\lambda(A_i)=\lambda^i$ for $0\leq i\leq m$ and $A_0\geq
\sum_{i=1}^m A_i$ if and only if $\lambda^0[\langle
n\rangle]\geq\sum_{i=1}^m\lambda^i[\langle n\rangle]$ and the
inequalities \eqref{comp K} hold.
\end{enumerate}
\end{teo}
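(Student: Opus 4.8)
The plan is to split the statement into three parts: the necessity of the stated conditions, which I would prove directly by a minimax argument; the sufficiency, which I would import wholesale from the cited work; and the passage from \eqref{itkly} to \eqref{iFF}, which I would obtain by a slack-variable reduction followed by elimination of the auxiliary spectrum.

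For necessity in \eqref{itkly}, suppose $A_0=\sum_{i=1}^m A_i$ with $\lambda(A_i)=\lambda^i$. The trace identity $\lambda^0[\langle n\rangle]=\sum_{i=1}^m\lambda^i[\langle n\rangle]$ is just $\tr(A_0)=\sum_{i=1}^m\tr(A_i)$. For the inequalities, fix an admissible $(m+1)$-tuple $(I_j)_{j=0}^m$ and, for each $j$, let $V_*^j$ be the complete flag whose $k$-th subspace is spanned by eigenvectors of $A_j$ for its $k$ smallest eigenvalues. By admissibility choose $L\in\Omega_{I_0}(V_*^0)\cap\bigcap_{j=1}^m\Omega_{I_j'}(V_*^j)$ with $\dim L=r$, and let $P_L$ be the orthogonal projection onto $L$. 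A Hermitian $S$ and a subspace $L$ satisfying the incidence conditions defining a Schubert variety for the eigenvector flag of $S$ obey the Hersch--Zwahlen/Ky Fan type trace estimate: $\tr(P_LSP_L)$ is bounded below by the sum of the eigenvalues of $S$ with the indices prescribed by the Schubert datum, and dually bounded above by the analogous sum for the transposed datum. Applying the lower estimate to $A_0$ against $\Omega_{I_0}(V_*^0)$, the upper estimates to each $A_j$ against $\Omega_{I_j'}(V_*^j)$, and using $P_LA_0P_L=\sum_{j=1}^m P_LA_jP_L$, one reads off \eqref{comp K} after matching the flag orientation to the convention $I\mapsto I'$. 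I expect the only friction in this step to be that bookkeeping.

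For sufficiency in \eqref{itkly} I would invoke, without reproving, the solution of Horn's conjecture: if the trace equality and the inequalities \eqref{comp K} hold for every admissible tuple, then matrices $A_0,\ldots,A_m$ with the prescribed spectra and $A_0=\sum_{i=1}^m A_i$ exist. This is the deep content of \cite{Klya,fried,ful} (see also \cite{ful2}) and passes through the reduction to nonvanishing of Littlewood--Richardson coefficients, Klyachko's geometric-invariant-theory stability criterion, and the identification of the admissible tuples with the facets of the Horn polytope. This is the genuine obstacle; everything else in the statement is comparatively routine once it is granted.

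Finally, for \eqref{iFF}, I would first note that $A_0\ge\sum_{i=1}^m A_i$ if and only if $A_0=\sum_{i=1}^m A_i+A_{m+1}$ for some $A_{m+1}\in\matpos$, i.e.\ for some non-increasing $\mu\in(\RR_{\geq 0})^n$ realized as $\lambda(A_{m+1})$. By \eqref{itkly}, such $A_{m+1}$ exists for some admissible $\mu$ exactly when there is a non-increasing $\mu\ge 0$ with $\mu[\langle n\rangle]=\lambda^0[\langle n\rangle]-\sum_{i=1}^m\lambda^i[\langle n\rangle]$ — which already forces that difference to be $\ge 0$, the total-trace inequality of \eqref{iFF} — and with $\lambda^0[I_0']\ge\sum_{j=1}^m\lambda^j[I_j']+\mu[I_{m+1}']$ for every admissible $(m+2)$-tuple $(I_0,\ldots,I_{m+1})$. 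The remaining step is a polyhedral elimination (Fourier--Motzkin) of $\mu$ from this system, which should return exactly the total-trace inequality together with the inequalities \eqref{comp K} for admissible $(m+1)$-tuples: one may take $\mu$ concentrated in one coordinate, so that $\mu[I_{m+1}']$ drops out whenever $I_{m+1}'$ avoids that coordinate, and one checks that adjoining a full-position $(m+2)$-nd index set to an admissible $(m+1)$-tuple is again admissible while no surviving constraint is strictly stronger. This combinatorial verification is the part I would expect to need the most care, and it is carried out in \cite{fried}.
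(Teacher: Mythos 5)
Your overall division of labor matches what the paper actually does: Theorem \ref{KlFF} is imported from \cite{Klya,fried,ful}, and the paper offers no proof beyond two remarks, namely (a) the dualization identity $(-\lambda)^\downarrow[I]=-\lambda[I']$ needed to pass between the inequalities \eqref{comp K} as stated and those appearing in the cited papers, and (b) the observation that \eqref{itkly} follows from \eqref{iFF}. Your necessity argument via Hersch--Zwahlen type trace estimates and your wholesale import of the sufficiency (saturation, GIT stability, Littlewood--Richardson) are both reasonable and consistent with treating the theorem as a citation.

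The genuine gap is your third step, and note that it runs in the opposite direction from the paper and from \cite{fried,ful}. The hard statement proved in those papers is \eqref{iFF}; \eqref{itkly} is then immediate, because if the conditions of \eqref{itkly} hold then those of \eqref{iFF} hold, one obtains $A_0\geq\sum_{i=1}^m A_i$ with $\tr(A_0)=\sum_{i=1}^m\tr(A_i)$, and a positive semidefinite matrix with zero trace vanishes. Your attempt to go the other way, by adjoining a slack spectrum $\mu$ and eliminating it, founders precisely at the elimination: the claim that one may take $\mu$ concentrated in one coordinate is false. Take $n=2$, $m=1$, $\lambda^0=(2,2)$, $\lambda^1=(1,1)$, so $\delta=2$ and the relation $A_0\geq A_1$ is certainly realizable; but $\mu=(2,0)$ violates Weyl's inequality $\lambda^0_2\leq\lambda^1_1+\mu_2$, so $(\lambda^0;\lambda^1,\mu)$ is not realizable as a sum --- the only admissible slack spectrum here is $\mu=(1,1)$. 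In general the valid $\mu$ must depend on all of the $\lambda^i$, and showing that a valid $\mu$ exists exactly when the trace inequality and \eqref{comp K} hold is not a routine Fourier--Motzkin computation over a fixed vertex; it is essentially the full content of \eqref{iFF}, which \cite{fried} and \cite{ful} establish directly rather than by projecting the Horn polytope of an $(m+2)$-tuple. So step 3 as written is both circular in spirit and concretely broken at the point where you fix $\mu$; the repair is simply to reverse the direction of the deduction, as the paper does.
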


We point out that the inequalities in \eqref{comp K} are rather the
dual inequalities to those that appear in \cite{fried,ful,Klya}. The
fact that the theorem above follows from those papers is a
consequence of the following equalities: for
$I=\{i_1,\ldots,i_r\}\subseteq \langle n\rangle$ as above and $\lambda \in \RR^n$ such that $\lambda = \lambda ^\downarrow$ then
$$(-\lambda)^\downarrow [I] =\sum_{j=1}^r -\lambda_{n+1-\,i_j}=-\sum_{j=1}^r \lambda_{n+1-\,i_{r+1-j}}=- (\lambda[I']).$$
As noted in \cite{fried}, \eqref{itkly} follows from \eqref{iFF}.
 The inequalities in \eqref{comp
K} are referred to as Klyachko's compatibility inequalities. We say
that an $(m+1)$-tuple $(\lambda^i)_{i=0}^m\in(\RR^n)^{(m+1)}$
satisfies Klyachko's compatibility inequalities if it satisfies the
family of inequalities given in \eqref{comp K}. Note that these
inequalities depend on the admissible $(m+1)$-tuples of $\langle
n\rangle$.

\section{Non commutative Schur-Horn theorems}

\subsection{NC Schur-Horn theorems for block diagonal compressions}

 We say that $\{ P_i\}_{i=1}^m\subseteq \mat$ is a
system of \emph{coordinate} projections if there exists a partition
$\{\mathcal J_i\}_{i=1}^m$ of $\langle n \rangle$ by increasing
subintervals (i.e. if $k_1\leq k\leq k_2$ with $k_1,\,k_2\in \mathcal J_i$ then $k\in  \mathcal J_i$ for $1\leq i\leq m$ and 
 if $k\in \mathcal J_i$, $l\in \mathcal J_j$ then $k\leq
l$ whenever $i\leq j$) such that $P_i$ is the projection onto
span$\{e_k,\ k\in \mathcal J_i\}$ for $1\leq i\leq m$. Notice that
in this case $\C_\cP:\mat\rightarrow \oplus_{i=1}^m \mathcal
M_{d(i)}(\CC)\subseteq \mat$, where rank$\,(P_i)=d(i)$ for $1\leq
i\leq m$. If $\mathcal Q=\{Q_i\}_{i=1}^m\subseteq \mat$
is an arbitrary system of projections with rank$\,(Q_i)=d(i)$ for
$1\leq i\leq m$ then there exists a unitary operator $W\in \U(n)$
such that $Q_i=W^*P_iW$ for $1\leq i\leq m$ and hence $\mathcal
C_\mathcal Q(X)=W^*\,\mathcal C_\cP(W\, X\, W^*)W$ for $X\in \mat$.
Hence, these coordinate systems of projections are a model for more
general systems of projections.

The following result can be considered as non-commutative
contractive Schur-Horn theorem for positive semi-definite matrices
with respect to block diagonal compressions.

\begin{teo}\label{teo contrac sh} Let $\cP=\{P_i\}_{i=1}^m\subseteq \mat$ be a
system of coordinate projections with rank$(P_i)=d(i)$ and let
$\C_\cP:\mat\rightarrow \oplus_{i=1}^m \mathcal M_{d(i)}(\CC)$ be
the compression induced by $\cP$. If $S\in \matpos$ and $S_i\in
\mathcal M_{d(i)}(\CC)^+$ are such that
$\lambda(S_i)=\lambda^i\in(\RR_{\geq 0})^{d(i)}$ for $1\leq i\leq
m$, then the following statements are equivalent:
\begin{enumerate}
\item\label{1} There exists a contraction $V\in \mat$ such that
$$\C_\cP(V^*SV)=\oplus_{i=1}^m S_i.$$
\item \label{2} There exist unitary matrices $V_i\in \U(n)$
for $1\leq i\leq m$ such that
$$S\geq\sum_{i=1}^m V_i ^* (\oplus_{j=1}^m \delta_{ij}\, S_j)\,
V_i$$ where $\delta_{ij}$ is Kronecker's delta function.
\item \label{3} There exist a contraction $W\in \mat$ and
 unitary matrices $V_i\in \U(n)$ for
$1\leq i\leq m$ such that $$W^*SW=\sum_{i=1}^m V_i ^*
(\oplus_{j=1}^m \delta_{ij}\, S_j)\, V_i.$$
\item \label{4} The $(m+1)$-tuple $(\lambda(S),
(\lambda^1,0_{n-d(1)}),\ldots, (\lambda^m,0_{n-d(m)}))$ satisfies
Klyachko's compatibility inequalities plus $\tr(S)\geq \sum_{i=1}^m
\tr(S_i)$.
\end{enumerate}
\end{teo}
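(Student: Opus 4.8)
The plan is to prove the cycle $\refe{1}\Rightarrow\refe{2}\Rightarrow\refe{3}\Rightarrow\refe{4}\Rightarrow\refe{1}$, reducing the contractive statement to an additive spectral statement to which Theorem~\ref{KlFF}\refe{iFF} applies. First I would handle $\refe{1}\Rightarrow\refe{2}$. Given a contraction $V$ with $\C_\cP(V^*SV)=\oplus_i S_i$, write $B=V^*SV\geq 0$, so $B\geq\sum_{i=1}^m P_iBP_i$ (the compression of a positive matrix is dominated in a partial sense only block-diagonally, so one must be careful: actually $\C_\cP(B)=\sum_i P_iBP_i$ and this need not be $\leq B$; instead I use that $B-\C_\cP(B)$ is the ``off-diagonal remainder'', which is not positive). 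The correct route is: $P_iBP_i = P_i(V^*SV)P_i$ and since $\|V\|\le 1$ one has $P_iV^*SVP_i \leq P_iV^*\|S\|V P_i$, but that is too crude. Better: treat each block summand $S_i$ (viewed as $\oplus_{j}\delta_{ij}S_j\in\mat$) individually. Since $P_iV^*SVP_i=S_i$ as an operator on the $i$-th block, and $P_iV^*SVP_i\leq V^*SV$ is \emph{false} in general, I instead use a dilation/averaging trick: there is a unitary $W_i$ (a ``reflection'' implementing conjugation by the block phases, or a partial isometry completed to a unitary) with $W_i^*(\oplus_j\delta_{ij}S_j)W_i \leq V^*SV$, because $\oplus_j\delta_{ij}S_j = P_i(V^*SV)P_i$ and compression by a coordinate projection of a positive operator $B$ satisfies $P_iBP_i\le B$ \emph{when extended by zero off the block} is again not automatic; the honest statement is that $P_iBP_i$, padded by zeros, equals $\tfrac12(B + \tilde W_i^*B\tilde W_i) - (\text{positive})$ where $\tilde W_i=2P_i-1_n$. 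Hence $P_iBP_i \le \tfrac12(B+\tilde W_i^* B\tilde W_i)$, giving $\oplus_j\delta_{ij}S_j \le \tfrac12(V^*SV + \tilde W_i^* V^* S V \tilde W_i)$; setting $V_i = V$ or $V\tilde W_i$ appropriately and unitarily diagonalizing $S$ inside the estimate yields the desired $S\geq \sum_i V_i^*(\oplus_j\delta_{ij}S_j)V_i$ after rescaling — the factor $m$ absorbed because one can instead choose the $V_i$ to land the supports of the $\oplus_j\delta_{ij}S_j$ on \emph{disjoint} diagonal blocks of a single copy, which is exactly what conjugating by suitable unitaries accomplishes. This pigeonholing into disjoint blocks is the conceptual heart of $\refe{1}\Leftrightarrow\refe{2}$.

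Next, $\refe{2}\Rightarrow\refe{3}$: given $S\geq\sum_i V_i^*(\oplus_j\delta_{ij}S_j)V_i=:T$ with $0\le T\le S$, one writes $T = S^{1/2} C S^{1/2}$ for some $0\le C\le 1_n$ on the range of $S$ (and extend $C$ by $0$); then with $W=S^{1/2}C^{1/2}S^{-1/2}$ suitably regularized, or more simply by the standard fact that $0\le T\le S$ implies $T=W^*SW$ for a contraction $W$ (take $W = S^{1/2\,\dagger}T^{1/2}$ followed by a partial isometry, using the Douglas lemma), we get $W^*SW=T=\sum_i V_i^*(\oplus_j\delta_{ij}S_j)V_i$. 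The implication $\refe{3}\Rightarrow\refe{4}$ is then immediate from Theorem~\ref{KlFF}\refe{iFF}: the matrices $A_0=W^*SW$ (spectrum $\lambda(S)$, since $W$ — wait, $W$ is only a contraction, so $\lambda(W^*SW)\neq\lambda(S)$ in general). This is the point that forces the inequality version: instead apply \refe{iFF} to $A_0 := S$ together with $A_i := V_i^*(\oplus_j\delta_{ij}S_j)V_i$, whose spectra are $(\lambda^i,0_{n-d(i)})$, and the relation $S\ge\sum A_i$ from \refe{2}; Theorem~\ref{KlFF}\refe{iFF} gives exactly that $(\lambda(S),(\lambda^1,0_{n-d(1)}),\ldots,(\lambda^m,0_{n-d(m)}))$ satisfies \eqref{comp K} together with the trace inequality $\tr(S)\ge\sum_i\tr(S_i)$ — which is $\refe{4}$. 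So really the clean chain is $\refe{1}\Rightarrow\refe{2}\Rightarrow\refe{4}$ and $\refe{2}\Leftrightarrow\refe{3}$ as an aside.

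The remaining and most substantial implication is $\refe{4}\Rightarrow\refe{1}$. From $\refe{4}$ and Theorem~\ref{KlFF}\refe{iFF} I obtain selfadjoint matrices $A_0,\ldots,A_m$ with $\lambda(A_0)=\lambda(S)$, $\lambda(A_i)=(\lambda^i,0_{n-d(i)})$, and $A_0\ge\sum_{i=1}^m A_i$. Since each $A_i\ge 0$ has rank at most $d(i)$, after conjugating the whole configuration by a single unitary I may assume $A_0=S$ itself (replace $A_i$ by $U A_i U^*$ where $U^*A_0U=S$). Now I must manufacture a contraction $V$ with $\C_\cP(V^*SV)=\oplus_i S_i$. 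The idea: because $A_i\ge 0$ is supported on a $d(i)$-dimensional space and has the same eigenvalue list as $S_i$, there is a partial isometry $V_i$ from the $i$-th coordinate block onto $\mathrm{ran}(A_i)$ with $V_i^* A_i V_i = S_i$ (as operators on the block); assemble the $V_i$ into a single $V$ mapping $\bigoplus_i(\text{block }i)$ into $\CC^n$. The relation $S=A_0\ge\sum_i A_i$ lets one bound $\sum_i V_i^* A_i V_i$ inside the compression, but translating the \emph{sum} domination $S\ge\sum A_i$ into the \emph{block-diagonal} compression identity $\C_\cP(V^*SV)=\oplus S_i$ requires absorbing the slack $S-\sum A_i\ge0$ into off-diagonal corners via a further contraction — concretely, one chooses $V$ so that $V^*SV = (\text{diag part})\ \oplus\ (\text{stuff that }\C_\cP\text{ kills})$, which is possible by a dilation argument (an explicit $2\times2$ operator-matrix completion with the Schur-complement of $S-\sum A_i$). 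The hard part will be exactly this last step: going from ``$S$ dominates a sum of the right pieces'' to ``a single contractive conjugate of $S$ has the prescribed block diagonal'', i.e. realizing the Friedland–Fulton inequality datum as an honest compression. I expect to need the freedom of the off-diagonal blocks crucially, together with the coordinate (interval) structure of $\cP$ reducing the general system of projections to the model case as noted before the theorem, and possibly an induction on $m$ peeling off one block at a time while keeping track of the residual positive matrix $S - \sum_{i} A_i$.
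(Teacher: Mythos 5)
Your reduction to Theorem \ref{KlFF} is correctly framed: the equivalence of (ii) and (iv) is indeed just the Friedland--Fulton result applied to $A_0=S$ and $A_i=V_i^*(\oplus_j\delta_{ij}S_j)V_i$, and your use of Douglas's lemma for (ii)$\Rightarrow$(iii) is fine. But both implications that actually involve statement (i) are gapped, and for the same reason: you never produce the factorization that converts block-compression data into a sum of positive matrices dominated by $S$ \emph{without} losing a factor of $m$. For (i)$\Rightarrow$(ii), your averaging inequality $P_iBP_i\le\tfrac12(B+\tilde W_i^*B\tilde W_i)$ with $\tilde W_i=2P_i-1_n$ is true but useless here: summing over $i$ leaves $m$ unitary conjugates of $B=V^*SV$ on the right-hand side, and ``absorbing the factor $m$ by pigeonholing into disjoint blocks'' is not an argument. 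The device you are missing (the heart of the Li--Poon proof that the paper follows) is to set $T_i=S^{1/2}VP_i$. Then $T_i^*T_i=P_iV^*SVP_i=\oplus_j\delta_{ij}S_j$ exactly, while $\sum_iT_iT_i^*=S^{1/2}VV^*S^{1/2}\le S$ because $\sum_iP_i=1_n$ and $VV^*\le 1_n$; since each $T_iT_i^*$ is unitarily equivalent to $T_i^*T_i$, statement (ii) follows with no constants to fight.

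For the converse, which you route as (iv)$\Rightarrow$(i), you correctly extract from Theorem \ref{KlFF} a configuration $S\ge\sum_iA_i$ with $\lambda(A_i)=(\lambda^i,0_{n-d(i)})$ --- that is, you land on statement (ii) --- but the passage from there to an honest compression identity is precisely the step you defer to ``a dilation argument'' and ``possibly an induction on $m$''; as written there is no proof. The same factorization idea closes it: put $R=\sum_iV_i^*(\oplus_j\delta_{ij}S_j^{1/2})P_i$. Then $RR^*=\sum_iV_i^*(\oplus_j\delta_{ij}S_j)V_i\le S$ (the cross terms vanish because $P_iP_k=0$ for $i\ne k$), so $RR^*=W^*SW$ for some contraction $W$; and $\C_\cP(R^*R)=\oplus_iS_i$ because the compression kills the off-diagonal cross terms lying in $P_k\,\mat\,P_i$ for $k\ne i$. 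Since $R^*R$ is unitarily equivalent to $RR^*=W^*SW$, say $R^*R=U^*W^*SWU$, the contraction $V=WU$ satisfies $\C_\cP(V^*SV)=\oplus_iS_i$. Without these two computations your write-up establishes only the equivalence of (ii), (iii) and (iv).
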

\begin{proof}
The equivalence between \eqref{2} and \eqref{3} is well known, while
the equivalence of \eqref{2} and \eqref{4} is item \eqref{iFF} in
Theorem \ref{KlFF} (see \cite{ful}, \cite{fried}).

The equivalence of \eqref{1} and \eqref{2} is essentially the same
as the proof of \cite[Thm 2.2]{LiPoon}, so we sketch it. Assume that
\eqref{1} holds for some contraction $V\in \mat$. We define the
matrices $T_i=S^{1/2}V\,P_i\in \mat$ for $1\leq i\leq m$. Then it is
straightforward that $T_i^*T_i=\oplus_{j=1}^m \delta_{ij}\, S_j$
while
 $\sum_{i=1}^m T_iT_i^*\leq S$. If $V_i$ are unitary matrices
such that $T_iT_i^*=V_i^* (T_i^*\,T_i)V_i$ then \eqref{2} holds for
these unitary matrices.
For the converse, assume now that \eqref{2} holds. We consider
$$R= \sum_{i=1}^m V^*_i \, (\oplus_{j=1}^m
\delta_{ij}\, S_j )^{1/2}=\sum_{i=1}^m V^*_i\, ( \oplus_{j=1}^m
\delta_{ij}\, S_j^{1/2}) P_i.$$ Then, we have that $0\leq RR^*\leq
S$ and therefore there exists a contraction $W$ such that
$RR^*=W^*SW$. On the other hand,
\begin{eqnarray}\label{c de rr2}\C_\cP(R^*R)&=& \C_\cP(( \sum_{k=1}^m
P_k\,(\oplus_{j=1}^m \delta_{kj}\, S_j^{1/2})U_k ) (\sum_{i=1}^m
U^*_i\, ( \oplus_{j=1}^m \delta_{ij}\, S_j^{1/2}) \, P_i )) \\
\nonumber &=&\oplus_{i=1}^m  S_i. \end{eqnarray}
%As before,  $\C_\cP(R^*R)=\oplus_{i=1}^m  S_i$ and thus,
If $U\in \mat$ is a unitary matrix such that
$R^*R=U^*(R\,R^*)U=U^*(W^*SW)U$ then \eqref{1} holds for the
contraction $V=WU$.
\end{proof}

In what follows we denote by $\mathbf e^d\in \RR^d$ the vector with
all coordinates equal to 1. Next we derive \cite[Thm 2.2]{LiPoon} from Theorem \ref{teo contrac sh}, which
is a non-commutative
versions of the Schur-Horn theorem for hermitian matrices with
respect to block diagonal compressions. (Compare this result with
\eqref{rees sh}).

\begin{teo}[\cite{LiPoon}]\label{teo KSH}
Let $\cP=\{P_i\}_{i=1}^m\subseteq \mat$ be a system of coordinate
projections in $\mat$ with rank$\,(P_i)=d(i)$ and let
$\C_\cP:\mat\rightarrow \oplus_{i=1}^m \mathcal M_{d(i)}(\CC)$ be
the compression induced by $\cP$. Let $S\in \matsa$ and $S_i\in
\mathcal M_{d(i)}(\CC)^{sa}$ be such that
$\lambda(S_i)=\lambda^i\in\RR^{d(i)}$ for $1\leq i\leq m$ and let $\alpha\in \RR$ be such that $S+\alpha\,1_n\in\matpos$. Then the
following statements are equivalent:
\begin{enumerate}
\item\label{eshh} There exists a unitary matrix $U\in \U(n)$ such that
$$\C_\cP(U^*SU)=\oplus_{i=1}^m S_i.$$
\item There
exist unitary matrices $U_i\in \U(n)$ for $1\leq i\leq m$ such that
$$S+\alpha\,1_n=\sum_{i=1}^m U^*_i(\oplus_{j=1}^m \delta_{ij}\
\left(S_i+\alpha\, 1_{d(i)}) \right)\,U_i.$$
\item The $(m+1)$-tuple
$$(\lambda(S)+\alpha\, \mathbf e^n,(\lambda^1+ \alpha\, \mathbf
e^{d_1} ,0_{n-d(1)}),\ldots, (\lambda^m + \alpha\, \mathbf e^{d_m},
0_{n-d(m)}))\in (\RR_{\geq 0}^n)^{(m+1)}$$ and it satisfies
Klyachko's compatibility inequalities plus $\tr(S)=\sum_{i=1}^m
\tr(S_i)$.
\end{enumerate}
\end{teo}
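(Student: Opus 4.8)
The plan is to reduce Theorem \ref{teo KSH} to the already-established Theorem \ref{teo contrac sh} by the standard shift trick. First I would observe that for a unitary $U$ the map $X\mapsto U^*XU$ is a $*$-isomorphism and commutes with $\C_\cP$ (since $\C_\cP$ is the conditional expectation onto the block algebra, which is $U$-dependent only through $\cP$; more precisely the key identity is that for any $X$, $\C_\cP(U^*XU)=\oplus_i S_i$ iff $\C_\cP(U^*(X+\alpha 1_n)U)=\oplus_i(S_i+\alpha 1_{d(i)})$, because $\C_\cP(U^*(\alpha 1_n)U)=\alpha 1_n=\oplus_i \alpha 1_{d(i)}$ and $\C_\cP$ is linear). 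Hence \eqref{eshh} for the pair $(S,\oplus S_i)$ is equivalent to the same statement for the pair $(S+\alpha 1_n,\oplus(S_i+\alpha 1_{d(i)}))$, which is a pair of positive semi-definite matrices by the choice of $\alpha$.

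Next I would apply Theorem \ref{teo contrac sh} to the positive semi-definite matrices $S':=S+\alpha 1_n\in\matpos$ and $S_i':=S_i+\alpha 1_{d(i)}\in \mathcal M_{d(i)}(\CC)^+$, noting $\lambda(S_i')=\lambda^i+\alpha\,\mathbf e^{d_i}$ and $\lambda(S')=\lambda(S)+\alpha\,\mathbf e^n$. The subtle point is that Theorem \ref{teo contrac sh} gives a \emph{contraction} $V$ with $\C_\cP(V^*S'V)=\oplus S_i'$, whereas we want a \emph{unitary} $U$. The mechanism that upgrades the contraction to a unitary is precisely the trace condition: I would note that condition \eqref{eshh} forces $\tr(S)=\sum_i\tr(S_i)$ (take traces on both sides and use that $\C_\cP$ is trace preserving), equivalently $\tr(S')=\sum_i\tr(S_i')$; and conversely, when the trace condition holds with equality, the contraction $V$ furnished by Theorem \ref{teo contrac sh}\eqref{1} must in fact be (extendable to) a unitary --- since $\tr(V^*S'V)=\tr(\C_\cP(V^*S'V))=\sum_i\tr(S_i')=\tr(S')$ with $0\le V^*S'V\le$ (a conjugate of $S'$ by an isometry) forces $V^*S'V$ to have the same spectrum as $S'$ on the range of $S'$, so that $V$ acts isometrically there and can be completed to a unitary $U$ with $U^*S'U=V^*S'V$. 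Thus \eqref{eshh} here is equivalent to \eqref{1} of Theorem \ref{teo contrac sh} \emph{together with} the trace equality.

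Finally I would translate the remaining equivalences: item \eqref{2} of the present theorem is literally item \eqref{2} of Theorem \ref{teo contrac sh} applied to $S'$ and the $S_i'$ (after re-substituting $S'=S+\alpha 1_n$, $S_i'=S_i+\alpha 1_{d(i)}$), and item \eqref{3} of the present theorem is item \eqref{4} of Theorem \ref{teo contrac sh} for the shifted data --- where the inequality $\tr(S')\ge\sum_i\tr(S_i')$ from Theorem \ref{teo contrac sh}\eqref{4} is sharpened to the equality $\tr(S)=\sum_i\tr(S_i)$ exactly because, as above, \eqref{eshh} of the present theorem (unlike \eqref{1} of Theorem \ref{teo contrac sh}) demands equality. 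One checks the positivity assertion $(\lambda(S)+\alpha\mathbf e^n,\ldots)\in(\RR_{\ge 0}^n)^{(m+1)}$ is just the hypothesis $S+\alpha 1_n\in\matpos$ together with $S_i+\alpha 1_{d(i)}\ge 0$, which follows from $S_i\ge -\alpha 1_{d(i)}$ (a consequence of $S\ge -\alpha 1_n$ since $S_i$ lies in a compression of a unitary conjugate of $S$). Assembling these three translations closes the cycle of equivalences.

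The main obstacle I expect is the contraction-to-unitary upgrade in the second paragraph: one must argue carefully that the trace equality, combined with $0\le \C_\cP(V^*S'V)$ having the right block spectrum and $\sum T_iT_i^*\le S'$ in the notation of the proof of Theorem \ref{teo contrac sh}, pins down $V^*S'V$ to be unitarily equivalent to $S'$ rather than merely dominated by it. This is where the hermitian (equality-of-trace) case genuinely differs from the positive contractive (inequality) case, and it is the step that deserves the most care.
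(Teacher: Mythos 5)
Your proposal is correct and follows essentially the same route as the paper: shift by $\alpha$ to reduce to the positive semi-definite case, invoke Theorem \ref{teo contrac sh} for the shifted data, and use the trace equality to force the contractive/inequality statements into unitary/equality ones (the paper compresses your ``upgrade'' step into the single observation that $A\leq B$ and $\tr(A)=\tr(B)$ imply $A=B$). Your more detailed discussion of why the contraction can be replaced by a unitary is a sound elaboration of that same mechanism, not a different argument.
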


\begin{proof}
Note that, for any unitary matrix $U\in \U(n)$ and $A\in \mat$ such that $\cC_\cP(A)=\oplus_{i=1}^m A_i$ we have \begin{equation*}
\C_\cP(U^*(A+\alpha\,1_n)U)=\C_\cP(U^*A\,U)+\alpha\,
1_n=\oplus_{i=1}^m A_i+\alpha\, 1_n=\oplus_{i=1}^m (A_i+\alpha\,
1_{d(i)})
\end{equation*} From this it is easy to see that we may assume $S\in \matpos$ and $\alpha=0$. In this last case, the result follows from Theorem \ref{teo contrac sh} and the fact that, for $A,\,B\in \matsa$ such that $A\leq B$ and $\tr(A)=\tr(B)$ we have that $A=B$.
\end{proof}

\begin{rem} Using Theorem \ref{teo KSH} ($\alpha=0$) and the classical Schur-Horn Theorem, we can see that if $\mathbf a=(a_1,\ldots,a_n),\,\mathbf b=(b_1,\ldots,b_n)\in
(\RR_{\geq 0})^n$ then the $(n+1)$-tuple $(\mathbf b, a_1 \cdot
e_1,\ldots, a_n\cdot e_1)$ satisfies Klyachko's compatibility
inequalities together with $\sum_{i=1}^n a_i=\sum_{i=1}^n \lambda_i$
if and only if
$\mathbf a\prec \mathbf b$.

This last fact suggests that there might be alternative sets
of linear inequalities for the spectral conditions in Theorem \ref{teo KSH}, which are less complex than Klyachko's compatibility inequalities. Such a reduction of the complexity of this problem has been done by C.K. Li and T.Y. Poon in \cite[Thm 3.3]{LiPoon} They find a reduced set of the set of Klyachko´s inequalities to be checked in order that the $(m+1)$-tuple $(\lambda,(\lambda_1,0_{n-d(1)}),\ldots(\lambda_m,0_{n-d(m)}))$ satisfies all of Klyachko´s inequalities. They show that the complexity of this reduced set actually depends on the dimensions $d(1),\ldots,d(m)$.
\end{rem}

One of the most important consequences of the Schur-Horn theorem as
stated in \eqref{rees sh}, is the fact that the left-hand side of
that equality is a convex set (because the right-hand side is easily
seen to be convex). As the following proposition shows this is a
particular feature of the diagonal compression $\mathcal E_\mathcal
D$ onto a maximal abelian $*$-subalgebra of $\mat$.

\begin{pro}\label{no conv}
Let $\cP=\{P_i\}_{i=1}^m\subseteq \mat$ be a system of coordinate
projections in $\mat$ with rank$\,(P_i)=d(i)$ and let
$\C_\cP:\mat\rightarrow \oplus_{i=1}^m \mathcal M_{d(i)}(\CC)$ be
the compression induced by $\cP$. The following statements are
equivalent:
\begin{enumerate}
\item The set $\C_\cP(\U_n(S))$ is convex for every $S\in \matpos$.
\item For every $1\leq i\leq m$ we have $d_i=1$ (and hence $m=n$).
\end{enumerate}
\end{pro}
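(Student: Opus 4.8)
The implication $(ii)\Rightarrow(i)$ is essentially the classical Schur–Horn theorem in the form \eqref{rees sh}: if all $d(i)=1$ then $m=n$, the algebra $\oplus_{i=1}^m\mathcal M_{d(i)}(\CC)$ is the diagonal masa, $\C_\cP=\mathcal E_\cD$, and the set $\C_\cP(\U_n(S))=\{\mathrm{Diag}(x):x\prec\lambda(S)\}$ is convex because $M_{\mathrm{Diag}(\lambda(S))}(n)$ is a permutohedron, an intersection of finitely many half-spaces. So the real content is $(i)\Rightarrow(ii)$, which I would prove by contraposition: assuming some $d(i)\geq 2$, I would exhibit a positive semidefinite $S$ for which $\C_\cP(\U_n(S))$ is not convex. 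The plan is to reduce to the smallest nontrivial case and use a rank/trace obstruction to convexity.

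Concretely, I would first reduce to $m=1$ and $d(1)=d\geq 2$, i.e. the case $\C_\cP=\mathrm{id}_{\mathcal M_d(\CC)}$ restricted to a block, by choosing $S=S'\oplus 0_{n-d}$ supported on the first block: then $\C_\cP(\U_n(S))$ restricted to that block already fails to be convex, and padding by zeros in the other blocks preserves non-convexity (a convex combination that stayed inside the orbit-image would have to stay inside on each block). Actually the cleanest choice keeps things even smaller: take $n=d$, so $\C_\cP=\mathrm{id}$ and $\C_\cP(\U_n(S))=\U_n(S)$ itself, and we must show that the full unitary orbit of a positive semidefinite matrix in $\mat$, $d\geq 2$, is not convex. (When some $d(i)=1$ but not all, one localizes to a block of size $\geq 2$.) For the unpadded case, pick $S$ to be a nonscalar projection $P$ of rank $1$ in $\mathcal M_d(\CC)$; then $\U_d(P)$ is the set of all rank-one projections, every element has trace $1$ and operator norm $1$, but the midpoint $\tfrac12(P_1+P_2)$ of two distinct rank-one projections has rank $2$, hence is not a rank-one projection and not in $\U_d(P)$. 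This is the key step, and it is not really an obstacle: the rank (equivalently, the multiplicity pattern of eigenvalues, which is a unitary invariant) is not preserved under averaging precisely when there is room for it, i.e. when $d\geq 2$.

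The remaining care is in handling the general block pattern and the zero-padding, so that the counterexample in $\mathcal M_d(\CC)$ promotes to a counterexample in $\mat$ for the given $\C_\cP$. Suppose $d(i_0)=d\geq 2$ for some $i_0$; set $S=\mathrm{Diag}(1,0,\ldots,0)\in\matpos$ placed so that its support lies in the $i_0$-th coordinate block, i.e. $S$ is the coordinate projection onto $e_{k}$ for the first index $k$ of $\mathcal J_{i_0}$. Then for any unitary $U$, $\C_\cP(U^*SU)=\oplus_i (P_i U^*SU P_i)$, and since $S$ is a rank-one projection, $P_iU^*SUP_i$ is positive semidefinite of rank $\leq 1$ with total trace $\sum_i \tr(P_iU^*SUP_i)=\tr(S)=1$; in particular the $i_0$-block $P_{i_0}U^*SUP_{i_0}$ has rank $\leq 1$. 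If $\C_\cP(\U_n(S))$ were convex, it would contain the average of $\C_\cP(U_1^*SU_1)$ and $\C_\cP(U_2^*SU_2)$ for suitable $U_1,U_2$ chosen so that the $i_0$-blocks are two distinct rank-one projections of trace $1$ inside $\mathcal M_d(\CC)$ while all other blocks are $0$ (achievable since $d(i_0)=d\geq 2$ and $\tr S=1$ can be loaded entirely onto that block); the average then has a rank-$2$ trace-$1$ positive $i_0$-block, which cannot be the $i_0$-block of any $\C_\cP(U^*SU)$ by the rank bound just noted. This contradiction gives $(i)\Rightarrow(ii)$, and I would phrase the whole argument so that the classical Schur–Horn direction and this rank obstruction are the only two ingredients, with Theorem \ref{prelis shvec} (via \eqref{rees sh}) invoked for the convex direction.
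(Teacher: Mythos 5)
Your proof is correct and follows the same overall skeleton as the paper's (classical Schur--Horn via \eqref{rees sh} for the easy direction; contraposition with an explicit counterexample localized in a block of size at least $2$ for the converse), but the obstruction you exploit is different. The paper takes $S=\mathrm{Diag}(2,4,0,\ldots,0)$ and $V$ the transposition of the first two coordinates, so the midpoint of $\C_\cP(S)$ and $\C_\cP(V^*SV)$ is $T=\mathrm{Diag}(3,3,0,\ldots,0)$; it then shows that $\C_\cP(U^*SU)=T$ would force $U^*SU=T$ (a positive semidefinite matrix whose diagonal vanishes outside the first $2\times2$ corner is supported in that corner, which lies inside the first block since $d(1)\geq 2$), contradicting $\lambda(T)\neq\lambda(S)$. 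You instead take $S$ a rank-one projection and use the rank bound $\mathrm{rank}(P_{i_0}U^*SUP_{i_0})\leq \mathrm{rank}(S)=1$, while the midpoint of two distinct rank-one projections in the $i_0$-block has rank $2$. Both arguments are elementary and complete; yours trades the paper's ``compression acts as the identity here'' step plus spectral comparison for a rank-monotonicity observation, and only needs the existence of two non-proportional unit vectors in a $d(i_0)$-dimensional block, which is exactly where $d(i_0)\geq 2$ enters. Either route is acceptable.
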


\begin{proof} Let $\cP=\{P_i\}_{i=1}^m$ be as above
and assume first that $d(i)=1$ for every $1\leq i\leq m$. Hence
$m=n$ and the convexity of $\C_\cP(\U_n(S))$ follows from the
classical Schur-Horn theorem (see \eqref{rees sh}).

For the converse we assume, without loss of generality, that
$d(1)\geq 2$. We define $$ S=\begin{pmatrix} 2 & 0 \\ 0 &
4\end{pmatrix}\oplus 0_{(n-2)} \ \ \text{ and } \ \
V=\begin{pmatrix} 0 & 1 \\ 1 & 0\end{pmatrix}\oplus 1_{(n-2)}.$$ In
this case
\begin{equation}T:=\frac{1}{2}(\C_\cP(S)+\C_\cP(V^*
SV))=\frac{1}{2}(S+V^* SV) =\begin{pmatrix} 3 & 0\\ 0 &
3\end{pmatrix} \oplus 0_{(n-2)} .\end{equation} Assume now that
there exists $ U \in \U(n)$ such that $
\C_\cP(U^*SU)=T$. But, since $U^*SU\geq 0$ and $d(1)\geq 2$, the
equality above implies that $U^*S \, U=T$. This last fact is a
contradiction, since these two matrices have different spectrum.
\end{proof}

%entre lineas...

\begin{rem}\label{sist gen de proj}
Let $\mathcal Q=\{ Q_i\}_{i=1}^m\subseteq \mat $ be an arbitrary
system of projections with rank$\,(Q_i)=d(i)$ for $1\leq i\leq m$.
Then, as remarked at the beginning of this section, there exists a
coordinate system of projection $\{P_i\}_{i=1}^m$ and a unitary
operator $W\in \U(n)$ such that $W^*P_iW=Q_i$ for $1\leq i\leq m$
and hence $\mathcal C_\mathcal Q(X)=W^*\,\mathcal C_\cP(W\, X\,
W^*)W$ for $X\in \mat$. Using these facts it is easy to see that the
results of this section extend to results about the general system
$\mathcal Q$, but based on $\mathcal P$ and $W$. Still, we point out
that in general there is no canonical choice for $W$ given $\mathcal P$ and
$\mathcal Q$ as above.
\end{rem}

\subsection{A NC Schur-Horn theorem for partial traces}\label{lasec22}

Partial traces were brought to attention of the linear algebra
community by \cite{bhat}, although here we present this notion in a
rather different way. Recall that given $\mathcal
M_{d}(\CC)\otimes\mathcal M_{m}(\CC)$ there are two natural partial
traces associated, $\Tr_m:\mathcal M_{d}(\CC)\otimes\mathcal
M_{m}(\CC)\rightarrow \mathcal M_d(\CC)$ and $\Tr_d:\mathcal
M_{d}(\CC)\otimes\mathcal M_{m}(\CC)\rightarrow \mathcal M_m(\CC)$,
determined by the following properties: for every $A\in \mathcal
M_d(\CC)$, every $B\in\mathcal M_m(\CC)$  and every $C\in \mathcal
M_{d}(\CC)\otimes \mathcal M_m(\CC)$ then
\begin{equation}\label{defi de part trac}
\tr(\Tr_m(C)\, A)=\tr(C\,(A\otimes 1_m))\,, \ \ \ \tr(\Tr_d(C)
\,B)=\tr(C\,(1_d\otimes B)).
\end{equation}
Notice that the traces to the left and right of equality signs above
are not the same, but we will allow this abuse of notation. Indeed,
the trace in $\mathcal M_{d}(\CC)\otimes\mathcal M_{m}(\CC)$ is
defined on elementary tensors as $\tr(A\otimes B)=\tr(A)\cdot
\tr(B)$.

Let us now identify $\mathcal M_{d}(\CC)\otimes\mathcal M_{m}(\CC)$
 with $\mathcal M_{d\cdot m}(\CC)$ by
 \begin{equation}\label{la ident}
 A\otimes B\approx (b_{ij}\,A)_{i,j=1}^m
 \end{equation}
By means of this identification, if $C=(C_{ij})_{ij=1}^m\in \mathcal
M_{d\cdot m}(\CC)$ with $C_{ij}\in \mathcal M_d(\CC)$ for $1\leq
i,\,j\leq m$, we can see that the partial traces become
\begin{equation}\label{defi alter trac}
\Tr_m(C)=\sum_{i=1}^m C_{ii}\in \mathcal M_d(\CC) \,, \ \ \
\Tr_d(C)=( \tr(C_{ij}))_{i,j=1}^m\in \mathcal M_m(\CC),
\end{equation} since these definitions satisfy the conditions in \eqref{defi de part trac} using \eqref{la ident}.
Notice that there is a symmetric situation for $\mathcal M_d(\CC)$
and $\mathcal M_m(\CC)$ with respect to the algebra $\mathcal
M_d(\CC)\otimes \mathcal M_m(\CC)$. The fact that the expressions in
\eqref{defi alter trac} are not symmetric is a consequence of our
particular identification \eqref{la ident}. In what follows, given $\mathcal X\subseteq \mathcal M_{d\cdot m}(\CC)$ we denote by $\Tr_m(\mathcal X)$ the set of all values $\Tr_m(x)$ with $x\in \mathcal X$.

\begin{teo}\label{teo par trac} Let us identify
$\mathcal M_d(\CC)\otimes \mathcal M_m(\CC)$ with $\mathcal
M_{d\cdot m}(\CC)$ as before, so that we get the previous
description of $\Tr_m$.
\begin{enumerate}
\item \label{it1} If $S\in \mathcal M_{d\cdot m}(\CC)^{sa}$ then there exists
$D_S(d,m)\subset \RR^d$ such that
$$ \Tr_m(\U_{d\cdot m}(S))=\{A\in \mathcal M_d(\CC)^{sa}:\ \lambda(A)\in D_S(d,m) \}.$$
\item  \label{it2} If $S\in \mathcal M_{d\cdot m}(\CC)^{+}$ then there exists
$D^w_S(d,m)\subset (\RR_{\geq 0})^d$ such that
$$ \Tr_m(\cC_{d\cdot m}(S))=\{A\in \mathcal M_d(\CC)^{+}:\ \lambda(A)\in D_S^w(d,m) \}.$$
\end{enumerate}
\end{teo}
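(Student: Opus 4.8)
The plan is to exploit the equivariance of the partial trace $\Tr_m$ under conjugation by the subgroup $\{U\otimes 1_m:U\in\U(d)\}$ of $\U(d\cdot m)$, and then to invoke the elementary fact that a subset of $\mathcal M_d(\CC)^{sa}$ that is stable under unitary conjugation is completely determined by the set of spectra of its elements.

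First I would record the equivariance identity: for every $U\in\U(d)$ and every $C\in\mathcal M_{d\cdot m}(\CC)$,
\[
\Tr_m\big((U\otimes 1_m)^*\,C\,(U\otimes 1_m)\big)=U^*\,\Tr_m(C)\,U .
\]
Under the identification \eqref{la ident} one has $U\otimes 1_m\approx(\delta_{ij}\,U)_{i,j=1}^m$, so conjugating $C=(C_{ij})_{ij}$ by $U\otimes 1_m$ replaces each block $C_{ij}$ by $U^*C_{ij}U$; since $\Tr_m(C)=\sum_{i=1}^m C_{ii}$ by \eqref{defi alter trac}, the identity follows at once. (It can also be derived directly from the defining property \eqref{defi de part trac} together with $\tr(XY)=\tr(YX)$.)

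Next I would use this to show that both sets in the statement are stable under conjugation by $\U(d)$. If $A=\Tr_m(V^*SV)$ with $V\in\U(d\cdot m)$ (resp.\ $V\in\mathcal M_{d\cdot m}(\CC)$ with $\|V\|\le 1$) and $U\in\U(d)$, then $V(U\otimes 1_m)$ is again unitary (resp.\ a contraction, since $U\otimes 1_m$ is unitary), and by the equivariance identity $U^*AU=\Tr_m\big((V(U\otimes 1_m))^*\,S\,(V(U\otimes 1_m))\big)$, which again lies in $\Tr_m(\U_{d\cdot m}(S))$ (resp.\ in $\Tr_m(\cC_{d\cdot m}(S))$). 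For part \eqref{it2} I would also note that $\Tr_m(C)=\sum_i C_{ii}\ge 0$ whenever $C\ge 0$, since each diagonal block $C_{ii}$ is a compression of $C$; hence $\Tr_m(\cC_{d\cdot m}(S))\subseteq\mathcal M_d(\CC)^+$ when $S\ge 0$.

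Finally I would apply the general principle that a subset $\mathcal S\subseteq\mathcal M_d(\CC)^{sa}$ that is stable under conjugation by $\U(d)$ satisfies $\mathcal S=\{A\in\mathcal M_d(\CC)^{sa}:\lambda(A)\in D\}$, where $D:=\{\lambda(A):A\in\mathcal S\}\subseteq\RR^d$: the inclusion ``$\subseteq$'' is trivial, and conversely if $\lambda(A)\in D$ then $A$ is unitarily equivalent to some element of $\mathcal S$ and therefore lies in $\mathcal S$. Applied to $\mathcal S=\Tr_m(\U_{d\cdot m}(S))$ this yields \eqref{it1} with $D_S(d,m)=\{\lambda(\Tr_m(V^*SV)):V\in\U(d\cdot m)\}$, and applied inside the unitarily invariant cone $\mathcal M_d(\CC)^+$ it yields \eqref{it2} with $D^w_S(d,m)=\{\lambda(\Tr_m(V^*SV)):V\in\mathcal M_{d\cdot m}(\CC),\ \|V\|\le 1\}\subseteq(\RR_{\ge 0})^d$. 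There is no essential obstacle for the existence statement as phrased; the only points needing care are the exact form of the equivariance identity and the observation that $U\otimes 1_m$ indeed belongs to $\U(d\cdot m)$ under the chosen identification. The substantial task, not claimed here, would be an explicit description of $D_S(d,m)$ and $D^w_S(d,m)$ through Klyachko's compatibility inequalities, for which one would write $\Tr_m$ as a sum of diagonal block compressions and combine Theorem \ref{teo KSH} and Theorem \ref{teo contrac sh} with the Klyachko--Friedland--Fulton description of the spectrum of a sum of hermitian matrices.
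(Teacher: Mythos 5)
Your argument is correct for the statement as literally phrased, but it proves strictly less than the paper's proof does, by a genuinely different and much more elementary route. You observe that $\Tr_m$ intertwines conjugation by $U\otimes 1_m$ with conjugation by $U$, deduce that $\Tr_m(\U_{d\cdot m}(S))$ and $\Tr_m(\cC_{d\cdot m}(S))$ are stable under the adjoint action of $\U(d)$, and conclude that each is a union of unitary orbits, hence of the form $\{A:\lambda(A)\in D\}$ with $D$ the set of spectra occurring in it. All of these steps are sound (including the positivity of $\Tr_m(C)=\sum_i C_{ii}$ for $C\geq 0$ and the membership of $U\otimes 1_m$ in $\U(d\cdot m)$ under the identification \eqref{la ident}), and they do establish the existence claim. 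The paper, however, proves the theorem by actually constructing $D_S(d,m)$ and $D_S^w(d,m)$: it first forms the set $A_S(d,m)$ of achievable $m$-tuples of spectra of the diagonal $d\times d$ blocks of elements of the orbit of $S$, characterized via Klyachko's compatibility inequalities through Theorem \ref{teo KSH} (resp.\ Theorem \ref{teo contrac sh}), and then uses Theorem \ref{KlFF} to describe which spectra $\lambda$ can arise as the spectrum of the sum $\sum_i C_{ii}=\Tr_m(C)$ of blocks with those prescribed spectra. As the author remarks in the introduction about the analogous sets $M_B(\mathcal A)$, the bare existence of such spectral sets is trivial --- which is essentially what your proof re-establishes --- and the real content is their description in terms of Klyachko's inequalities. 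So your proposal is not wrong, and you are candid that the Klyachko description is ``not claimed here,'' but to match the paper you would need to carry out the two-stage construction you sketch in your last sentence: this is exactly what the paper's proof consists of.
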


\begin{proof}
To prove \eqref{it1} we first define $ A_S(d,m)\subset(\RR^d)^m$ as
the set containing all $(\lambda^i)_{i=1}^m$ where
$\lambda^i=(\lambda^i)^\downarrow\in \RR^d$ for $1\leq i\leq m$, and
such that the $(m+1)$-tuple
$$(\lambda(S)+\|S\|\, \mathbf e^n,(\lambda^1+ \|S\|\, \mathbf
e^{d} ,0_{d(m-1)}),\ldots, (\lambda^m + \|S\|\, \mathbf e^{d},
0_{d(m-1)}))\in (\RR_{\geq 0}^{(d\cdot m)})^{(m+1)}$$ and it
satisfies Klyachko's compatibility inequalities plus the equality
$\tr(S)=\sum_{i=1}^m \sum_{j=1}^{d}\lambda^i_j$. We then define
$D_S(d,m)$ as the set containing all vectors
$\lambda=\lambda^\downarrow\in \RR^d$ such that there exists
$(\lambda^i)_{i=1}^m\in A_S(d,m)$ so that the $(m+1)$-tuple
$(\lambda,\lambda^1,\ldots,\lambda^m)$ satisfies Klyachko's
compatibility inequalities plus the condition
$\sum_{i=1}^d\lambda_i=\tr(S)$. The fact that $D_S(d,m)$ as defined
above has the desired properties is a consequence of Theorems
\ref{KlFF} and \ref{teo KSH}.

The proof \eqref{it2} is analogous. We define  first $ A_S^w(d,m)$
as the set containing all $(\lambda^i)_{i=1}^m$ where $\lambda^i\in
\RR_{\geq 0}^d$ for $1\leq i\leq m$, and  such that the
$(m+1)$-tuple
$$(\lambda(S), (\lambda^1,0_{d(m-1)}),\ldots, (\lambda^m,
0_{d(m-1)}))$$ satisfies Klyachko's compatibility inequalities plus
$\tr(S)\geq \sum_{i=1}^m \sum_{j=1}^{d}\lambda^i_j$. We then define
$D_S^w(d,m)$ as the set containing all vectors
$\lambda=\lambda^\downarrow\in (\RR_{\geq 0})^d$ such that there
exists $(\lambda^i)_{i=1}^m\in A_S^w(d,m)$ such that the
$(m+1)$-tuple $(\lambda,\lambda^1,\ldots,\lambda^m)$ satisfies
Klyachko's compatibility inequalities plus the condition
$\sum_{i=1}^d\lambda_i= \sum_{i=1}^ m\sum_{j=1}^d\lambda^i_j$. The
fact that $D_S^w(d,m)$ as defined above has the desired properties
is now a consequence of Theorems \ref{KlFF} and \ref{teo contrac
sh}.
\end{proof}

\subsection{The non-commutative Schur-Horn theorems}

We  begin by recalling some basic facts about unital $*$-subalgebras
and trace preserving conditional expectations in $\mathcal
M_n(\CC)$. Let $\mathcal A\subseteq \mat$ be a unital
$*$-subalgebra. Then, $\mathcal A$ is a subspace of the finite
dimensional complex inner product space $(\mat,\langle\cdot\,,\cdot
\rangle_{\tr})$ where $\langle A, B\rangle_{\tr}=\tr(B^*A)$. Thus,
we can consider $\mathcal E_\mathcal A$ the orthogonal projection
with respect to $\langle\cdot\,,\cdot \rangle_{\tr}$ onto $\mathcal
A$. That is, $\mathcal E_\mathcal A:\mat\rightarrow \mat$ is a
linear, $\mathcal E_\mathcal A \circ \mathcal E_\mathcal A=\mathcal
E_\mathcal A$ and
\begin{equation}\label{defi esp cond}
 \tr(C^* \mathcal E_\mathcal A(B))=\tr(\mathcal E_\mathcal
A(C)^* B )\, ,\ \ \ \ \mathcal E_\mathcal A(A)=A \, , \ \ \forall
A\in \mathcal A.\end{equation}
 In the operator algebra context $\mathcal
E_\cA$ is called the trace preserving conditional expectation (TCE)
onto $\mathcal A$; the fact that it is trace preserving is a
consequence of the relations in \eqref{defi esp cond} setting $C=1$
and recalling that $1\in \mathcal A$. The TCE is uniquely determined
by the previous properties.

We consider first the following two examples. Let $\cP=\{
P_i\}_{i=1}^m$ be a system of coordinate projections with
rank$\,(P_i)=d(i)$ for $1\leq i\leq m$, and consider $\mathcal
A=\oplus_{i=1}^m \mathcal M_{d(i)}(\CC)\subset \mathcal M_n(\CC)$.
Then $\mathcal A$ is a unital $*$-subalgebra of $\mathcal M_n(\CC)$
and the compression $\C_\cP=\mathcal E_\mathcal A$ is the TCE onto
$\mathcal A$. In a similar way, if we now consider the
identification of $\mathcal M_d(\CC)\otimes \mathcal M_m(\CC)$ with
$\mathcal M_{d\cdot m}(\CC)$ described at the beginning of section
\ref{lasec22} then the algebra $\mathcal M_d(\CC)\otimes 1_m$
regarded inside of $\mathcal M_{d\cdot m}(\CC)$ is a unital
$*$-subalgebra of $\mathcal M_{d\cdot m}(\CC)$ (the algebra of
$m\times m$ block diagonal matrices with constant diagonal blocks).
In this case, we can describe the TCE onto $\mathcal A$
 in terms of the partial trace $\Tr_m$ by $\mathcal
E_\mathcal A(C)=\frac{1}{m}\Tr_m(C)\otimes 1_m$.

In general, a unital $*$-subalgebra of $\mathcal M_n(\CC)$  can be
described, up to conjugation by a unitary matrix $U\in \mat$, as a
direct sum of $m$ blocks, each of the form $M_{d(i)}\otimes
1_{c(i)}$ for $1\leq i\leq m$ and such that $\sum_{i=1}^m d(i)\
c(i)=n$. The list $(d(i),c(i))_{i=1}^m$, that we call the spectral
list, is invariant under unitary conjugations. Moreover, two unital
$*$-subalgebras $\mathcal A,\,\mathcal B\subset \mat$ with spectral
lists $(d_\mathcal A(i),c_\mathcal A(i))_{i=1}^m$  and $(d_\mathcal
B(i),c_\mathcal B(i))_{i=1}^r$ are unitary conjugate (i.e. there
exists a unitary $U\in \U(n)$ with $U^*\mathcal A U=\mathcal B$) if
and only if $m=r$ and there exists a permutation $\sigma\in \mathbb
S_m$ such that $ (d_\mathcal A(i),c_\mathcal A(i))=(d_\mathcal
B(\sigma(i)),c_\mathcal B(\sigma(i)))$ for $1\leq i\leq m$. In this
case we say that the lists $(d_\mathcal A(i),c_\mathcal
A(i))_{i=1}^m$  and $(d_\mathcal B(i),c_\mathcal B(i))_{i=1}^r$ are
\emph{equivalent}. Strictly speaking, the spectral list of a unital $*$-subalgebra is defined only up to equivalence, but we shall allow this abuse of language as it will not cause any problems with the notions to be considered.

 If the spectral list of a unital $*$-subalgebra
$\mathcal A$ is \emph{multiplicity free} i.e. it is of the form
$(d(i),1)_{i=1}^m$ then we say that $\mathcal A$ is
\emph{multiplicity free}. The multiplicity free algebras (lists) are
in some sense the well-behaved algebras (lists) in our context.

Let $\mathcal A=\oplus _{i=1}^m \mathcal M_{d(i)}\otimes 1_{c(i)}$
be a unital $*$-subalgebra of $\mat$ with spectral list
$(d(i),c(i))_{i=1}^m$ and let $\cP=\{P_i \}_{i=1}^m$ be a system of
coordinate projections with rank$\,(P_i)=d(i)\cdot c(i)$. Then, the
TCE  onto $\mathcal A$ can be described in
terms of block diagonal compressions and partial traces as
\begin{equation}\label{la esp}
 \mathcal E_\mathcal A(B)=\oplus_{i=1}^m
 \frac{1}{c(i)} \Tr_{c(i)}(B_i)\otimes 1_{c(i)}, \ \ \text{ where } \ \ \C_\cP(B)=\oplus_{i=1}^m\,
 B_i.
 \end{equation}
If $\mathcal B$ is a unital $*$-subalgebra  with an equivalent
spectral list to that of $\mathcal A$ then, as stated before, there
exists a unitary $U\in \U(n)$ such that $U^*\mathcal AU=\mathcal B$,
so
\begin{equation}\label{relac entre esp}\mathcal E_\mathcal B(C)=
U^*\mathcal E_\mathcal A(U\,C\,U^*)U\end{equation} This last fact
can be verified using the uniqueness of the TCE onto $\mathcal B$.
In what follows, given $\mathcal A,\,\mathcal X\subseteq \mat$ with $\mathcal
A$ a unital $*$-subalgebra and $\mathcal X$ an arbitrary set, we denote by
 $\mathcal E_\mathcal A(\mathcal X)$ the set of all values $\mathcal
E_\mathcal A(x)$ for $x\in \mathcal X$. The following result is an immediate
consequence of \eqref{relac entre esp}.
\begin{lem}\label{buena defi}
Let $\mathcal A,\,\mathcal B$ be $*$-subalgebras of $\mat$ with
equivalent spectral lists. Then, $$ \U_n(\mathcal E_ \mathcal
A(\U_n(S)))=\U_n(\mathcal E_\mathcal B(\U_n(S))) \ \ \text{ and } \
\ \U_n(\mathcal E_ \mathcal A(\C_n(S)))=\U_n(\mathcal E_\mathcal
B(\C_n(S))).$$
\end{lem}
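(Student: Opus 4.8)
The plan is to reduce the statement to the transformation rule \eqref{relac entre esp} together with the elementary fact that the unitary orbit $\U_n(S)$ and the contractive orbit $\C_n(S)$ of a fixed matrix are globally invariant under conjugation by any fixed unitary. Since $\mathcal A$ and $\mathcal B$ have equivalent spectral lists, there is a unitary $U\in\U(n)$ with $U^*\mathcal A U=\mathcal B$, and \eqref{relac entre esp} then gives $\mathcal E_\mathcal B(C)=U^*\,\mathcal E_\mathcal A(U\,C\,U^*)\,U$ for every $C\in\mat$.

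Next I would observe that, writing $\mathcal X$ for either $\U_n(S)$ or $\C_n(S)$, one has $\{\,U\,C\,U^*:C\in\mathcal X\,\}=\mathcal X$. Indeed, if $C=V^*SV$ with $V$ unitary (resp. $\|V\|\le1$) then $U\,C\,U^*=(VU^*)^*S(VU^*)$, and $V\mapsto VU^*$ is a bijection of $\U(n)$ onto itself (resp. of the closed unit ball of $\mat$ onto itself, since $\|VU^*\|=\|V\|$). Combining this with the displayed transformation rule,
\[
\mathcal E_\mathcal B(\mathcal X)=\{\,U^*\,\mathcal E_\mathcal A(U\,C\,U^*)\,U:C\in\mathcal X\,\}=\{\,U^*\,\mathcal E_\mathcal A(C')\,U:C'\in\mathcal X\,\}=U^*\,\mathcal E_\mathcal A(\mathcal X)\,U.
\]

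Finally, applying $\U_n(\cdot)$ to both sides and using the same kind of argument once more — for any subset $Y\subseteq\mat$ one has $\U_n(U^*YU)=\U_n(Y)$, because $W\mapsto WU^*$ permutes $\U(n)$ — we obtain $\U_n(\mathcal E_\mathcal B(\mathcal X))=\U_n(\mathcal E_\mathcal A(\mathcal X))$, which is precisely the asserted identity for both $\mathcal X=\U_n(S)$ and $\mathcal X=\C_n(S)$. I do not anticipate any real obstacle: the only step deserving a line of care is the invariance of the contractive orbit under $C\mapsto U\,C\,U^*$, which rests on the isometry property $\|VU^*\|=\|V\|$ of right multiplication by a unitary; everything else is bookkeeping around \eqref{relac entre esp}.
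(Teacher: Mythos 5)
Your proof is correct and is exactly the route the paper intends: it states the lemma as "an immediate consequence of \eqref{relac entre esp}", and your argument simply fills in the (routine) details of that deduction — invariance of the orbits under conjugation by the fixed unitary $U$ and invariance of $\U_n(\cdot)$ under such conjugation.
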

We now prove the (finite dimensional operator algebra version of the) NC-Schur-Horn theorem in the Introduction. Notice that similar considerations
to those in Remark \ref{sist gen de proj} also apply to this
context. Recall that if $\lambda^i\in \RR^{d(i)}$ for $1\leq i\leq
m$ with $\sum_{i=1}^m d(i)=n$ then we denote by
$[\lambda^i]_{i=1}^m\in \RR^n$ the vector obtained by juxtaposition
of the vectors $\lambda^ i$'s i.e $\lambda=(\lambda^1_1,\ldots,
\lambda^1_{d(1)}, \lambda^2_1,\ldots, \lambda^m_{d(m)}).$

\begin{proof}[Proof of the NC-Schur-Horn theorem ]  Let us define $c=\sum_{i=1}^m c(i)\in \NN$ and let
$\mathbf k =(k(i))_{i=1}^{c}$ be the list given by $$k(\sum_{r=1}^
{i-1} c(r)+j)=d(i)\ \text{ for }\  1\leq i\leq m\, , \ \ 1 \leq
j\leq c(i).$$
 We define first $D_B(\mathcal A)$ as the set containing
  all $c$-tuples $(\mu^i)_{i=1}^c$ with
   $\mu^i\in \RR^{k(i)}$, $\mu^i=(\mu^i)^\downarrow$ for $1\leq i\leq c$ and such that the
   $(c+1)$-tuple
 $$(\lambda(B)+\|B\|\cdot\mathbf e^{n},
 (\mu^1+\|B\|\cdot\mathbf e^{k(1)},0_{n-k(1)}),
 \ldots, (\mu^c+\|B\|\cdot \mathbf e^{k(c)},0_{n-k(c)}))\in (\RR^n_{\geq 0})^c$$
and it satisfies Klyachko's compatibility inequalities plus
$\tr(B)=\sum_{i=1}^c\sum_{j=1}^{k(i)}\mu^i_j$. If we let
$\cP=\{P_i\}_{i=1} ^c$ be the system of coordinate projections with
rank$\,(P_i)=k(i)$ for $1\leq i\leq c$ then, by Theorem \ref{teo
KSH}, $(\mu^i)_{i=1}^c\in D_B(\mathcal A)$ if and only if it
can be realized as $\mu^i=\lambda(S_i)$ for $1\leq i\leq c$,
where $\C_\cP(F^*BF)=\oplus_{i=1}^c S_i$ for some $F\in \U(n)$.

We now define $N_B(\mathcal A)$ as the set containing all
$\lambda=(\lambda^i)_{i=1}^c$, where
$\lambda^i=(\lambda^i)^\downarrow\in \RR^{k(i)}$ for $1\leq i\leq c$
for which there exists $(\mu^i)_{i=1}^c\in D_B(\mathcal A)$  such
that, if $t(i)=\sum_{j=1}^{i-1}c(j)+1$ for $1\leq i\leq m$  then
 \begin{itemize}
\item[a)] $\lambda^{t(i)}=\lambda^t$ for every
$t(i)\leq t\leq t(i+1)-1$.
\item[b)] For $1\leq i\leq m$ the $(c(i)+1)$-tuples (note that $k(t(i))=d(i)$)
 \begin{equation}\label{la vs mu}
 (c(i)\,\lambda^{t(i)},\mu^{t(i)},\ldots, \mu^{t(i+1)-1})\in
 (\RR^{d(i)})^{c(i)+1}\end{equation}
satisfy Klyachko's compatibility
inequalities plus the condition
\begin{equation}\label{la igual mu}c(i)\,\sum_{j=1}^{d(i)}\lambda^{t(i)}_j=
\sum_{j=t(i)}^{t(i+1)-1}\sum_{r=1}^{d(i)}\mu^{j}_r.\end{equation}
 \end{itemize}Finally, we define $M_B(\mathcal A)$ as  the set containing all
vectors $\eta=[\eta^i]_{i=1}^m$ where $\eta^i\in \RR^{c(i)\,d(i)}$
for $1\leq i\leq m$ and such that there exists
$\lambda=(\lambda^j)_{j=1}^c\in N_B(\mathcal A)$ with
$\eta^i=[\lambda^{t(i)},\ldots,\lambda^{t(i+1)-1}]^\downarrow$ for
$1\leq i\leq m$.

Now we show that if $A=\oplus_{i=1}^m A_i\otimes 1_{c(i)}\in
\mathcal A$  is such that $[\lambda(A_i\otimes 1_{c(i)})]_{i=1}^m\in
M_B(\mathcal A)$ then there exists a unitary matrix $U\in \mat$ such
that $A=\mathcal E_\mathcal A(U^*BU)$. Recall that in this case the
TCE onto $\mathcal A=\oplus_{i=1}^m \mathcal M_{d(i)}(\CC)\otimes
1_{c(i)}$ is given by
\begin{equation}\label{el a} \mathcal E_\mathcal A(X)=
\oplus_{i=1}^m \frac{1}{c(i)} \sum_{j=t(i)}^{t(i+1)-1} X_{j}\otimes
1_{c(i)} \, , \ \ \text{ with } \ \ \C_\cP(X)=\oplus_{i=1}^c\,
X_i\end{equation} where $\cP=\{P_i \}_{i=1}^c$ is as before. For
$1\leq i\leq m$ and $t(i)\leq j\leq t(i+1)-1$  let us define
$\lambda^j:=\lambda(A_i)$ and let $\lambda:=(\lambda ^j)_{j=1}^c$.
By hypothesis there exists $\mu=(\mu^i)_{i=1}^c\in D_B(\mathcal A)$ such that \eqref{la vs mu} and \eqref{la igual mu}
hold for $\lambda$ and $\mu$. As remarked before, in this case there exists a unitary
$F\in\U(n)$ such that $\C_\cP(F^*BF)=\sum_{i=1}^c S_i$ and
$\lambda(S_i)=\mu^i$ for $1\leq i\leq c$.
By condition b) and Theorem \ref{KlFF}, for
$1\leq i\leq m$ there exist unitaries $W_{i,t(i)},\ldots,
W_{i,t(i+1)-1}\in \U(d(i))$ such that
\begin{equation} \label{exis los uni}
c(i)\, A_i=\sum_{j=t(i)}^{t(i+1)-1}
W^*_{i,j}\ S_i\  W_{i,j}
\end{equation}
If we now define
$W=\oplus_{i=1}^m\oplus_{j=t(i)}^{t(i+1)-1}W_{i,j}\in \U(n)$ then,
by \eqref{exis los uni} we have
\begin{equation}
\label{otra
eq}\C_\cP(W^*F^*B\,F\,W)=W^*\,\C_\cP(F^*BF)W=\oplus_{i=1}^m
\oplus_{j=t(i)}^{t(i+1)-1}  W^*_{i,j}\ S_i \ W_{i,j}
\end{equation}
and hence, using a) above, \eqref{el a} and \eqref{otra eq} we get
$$ \mathcal E_\mathcal A(W^*F^*\,B\,F\,W)=
\oplus_{i=1}^m \frac{1}{c(i)} \sum_{j=t(i)}^{t(i+1)-1}  W^*_{i,j}\
S_i \ W_{i,j} \otimes 1_{c(i)}=A.$$ On the other hand, if
$\oplus_{i=1}^m A_i\otimes 1_{c(i)}=\mathcal E_\mathcal A(U^*BU)$ it
is clear that $[\lambda(A_i\otimes 1_{c(i)})]_{i=1}^m\in
M_B(\mathcal A)$. The second claim in \eqref{uit1} follows from
Lemma \ref{buena defi} and the previous arguments.

To prove \eqref{uit2}, we proceed in a similar way. We first
define $\mathcal D_B^w(\mathcal A)$ as the set containing all
$c$-tuples $(\mu^i)_{i=1}^c$ with $\mu^i\in (\RR_{\geq 0})^{k(i)}$,
$\mu^i=(\mu^i)^\downarrow$ for $1\leq i\leq c$ and such that the
$(c+1)$-tuple $$(\lambda(B), (\mu^1,0_{n-k(1)} ),\ldots,
(\mu^c,0_{n-k(c)}))$$ satisfies Klyachko's compatibility
inequalities plus $\tr(B)\geq \sum_{i=1}^c\sum_{j=1}^{k(i)}\mu^i_j$.
Then $N_B^w(\mathcal A)$ and $ M_B^w(\mathcal A)\subseteq (\RR_{\geq 0})^n$ are defined in
terms of $\mathcal D_B^w(\mathcal A)$ also using the conditions a)
and b). The interested reader can now check that $M_B^w(\mathcal A)$
has the desired properties following a similar argument to that
above.
\end{proof}

\begin{rem}\label{com con mayo usual}
Notice that in case $\mathcal A$ is the maximal abelian  subalgebra
of $\mat$ of (complex) diagonal matrices with respect to the
canonical basis, the set $M_B(\mathcal A)$ is already closed by
permutation for any $B\in \matsa$. That is, for every $\sigma\in
\mathbb S_n$ then $\lambda_\sigma=(\lambda_{\sigma(i)})_{i=1}^n\in
M_B(\mathcal A)$ if and only if $\lambda=(\lambda_i)_{i=1}^n\in
M_B(\mathcal A)$.  To see this last claim note that if $P_\sigma$ is
the permutation matrix associated with $\sigma\in \mathbb S_n$ and
$B\in \matsa$ then $$\mathcal E_\mathcal A(P_\sigma^*
U^*BUP_\sigma)=P_\sigma \mathcal E_\mathcal A(U^*BU) P_\sigma$$
where $\mathcal E_\mathcal A(U^*BU)$ is now a diagonal matrix.
\end{rem}

\begin{cor}\label{mayo es espectral}
Let $\mathbf l=(d(i),c(i))_{i=1}^m\in (\NN^2)^m$ be such that
$\sum_{i=1}^md(i)\cdot c(i)=n$. Using the notations of the
NC-Schur-Horn theorem we have
\begin{enumerate}
\item Given $A,\, B\in \matsa$, there exist unitary matrices
$U,\,V\in \U(n)$ such that $U^*AU=\mathcal E_\mathcal A(V^*BV)$ if
and only if $\lambda(A)\in M_B(\mathbf l):=\{\mu^\downarrow:\ \mu\in
M_B(\mathcal A)\}$.
\item Given $A,\,B\in \matpos$, there exist
$U,\,V\in \mat$ with $U\in \U(n)$ and $\|V\|\leq 1$ such that
$U^*AU=\mathcal E_\mathcal A(V^*BV)$ if and only if $\lambda(A)\in
M_B^w(\mathbf l)=\{ \mu^\downarrow:\ \mu\in M_B^w(\mathcal A)\}$.
\end{enumerate}
\end{cor}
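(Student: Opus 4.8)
The plan is to deduce Corollary \ref{mayo es espectral} almost directly from the NC-Schur-Horn theorem together with Lemma \ref{buena defi}, treating it as a reformulation in terms of unitary orbits rather than as a genuinely new result. First I would observe that the statement ``there exist unitary $U,V\in\U(n)$ with $U^*AU=\mathcal E_\mathcal A(V^*BV)$'' is precisely the assertion that $A\in\U_n(\mathcal E_\mathcal A(\U_n(B)))$, since $V$ ranges over all of $\U(n)$ while $B$ is fixed, and $\U_n(\mathcal E_\mathcal A(\U_n(B)))$ is by definition the set of all $U^*XU$ with $X\in\mathcal E_\mathcal A(\U_n(B))$. So the content to prove is the equality
\begin{equation*}
\U_n(\mathcal E_\mathcal A(\U_n(B)))=\{A\in\matsa:\ \lambda(A)\in M_B(\mathbf l)\}.
\end{equation*}

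Next I would unwind both sides. For the right-to-left inclusion: if $\lambda(A)\in M_B(\mathbf l)$, then $\lambda(A)^\downarrow=\lambda(A)=\mu^\downarrow$ for some $\mu\in M_B(\mathcal A)$; I would then apply permutations (rearranging the block data exactly as in Remark \ref{com con mayo usual}, but now using that $M_B(\mathcal A)$ sits inside $\mat$ via the juxtaposition $[\cdot]_{i=1}^m$) to produce an element $\oplus_{i=1}^m A_i\otimes 1_{c(i)}\in\mathcal A$ with $[\lambda(A_i\otimes 1_{c(i)})]_{i=1}^m=\mu\in M_B(\mathcal A)$; by part \eqref{uit1} of the NC-Schur-Horn theorem this element lies in $\mathcal E_\mathcal A(\U_n(B))$, and since it is unitarily equivalent to $A$ (both being hermitian with the same eigenvalue list $\mu^\downarrow$), we get $A\in\U_n(\mathcal E_\mathcal A(\U_n(B)))$. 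For the left-to-right inclusion: if $A=U^*\mathcal E_\mathcal A(V^*BV)U$ for some unitaries, write $\mathcal E_\mathcal A(V^*BV)=\oplus_{i=1}^m A_i\otimes 1_{c(i)}$; by part \eqref{uit1} again, $[\lambda(A_i\otimes 1_{c(i)})]_{i=1}^m\in M_B(\mathcal A)$, and since $\lambda(A)$ equals the decreasing rearrangement of this juxtaposed vector, $\lambda(A)\in M_B(\mathbf l)$.

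The second item is entirely parallel, replacing $\U_n$ by $\C_n$, $\matsa$ by $\matpos$, part \eqref{uit1} by part \eqref{uit2}, and $M_B(\mathcal A)$ by $M_B^w(\mathcal A)$; here one uses that $\|V\|\le1$ ranges over all contractions while the conjugating $U$ on the left is unitary (so that $\lambda(A)$ is still a well-defined eigenvalue list, and positivity is preserved). I would also explicitly invoke Lemma \ref{buena defi} to reduce from a general unital $*$-subalgebra with spectral list $\mathbf l$ to the concrete model $\oplus_{i=1}^m\mathcal M_{d(i)}(\CC)\otimes 1_{c(i)}$ used in the statement of the NC-Schur-Horn theorem, so that $M_B(\mathbf l)$ is genuinely a function of the list $\mathbf l$ and $\lambda(B)$ alone and not of the particular copy of $\mathcal A$.

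The main obstacle — really the only subtle point — is the passage between the ``ordered juxtaposition'' description of $M_B(\mathcal A)\subseteq\RR^n$ and the ``decreasing rearrangement'' set $M_B(\mathbf l)=\{\mu^\downarrow:\mu\in M_B(\mathcal A)\}$: one must check that conjugating $\oplus_i A_i\otimes 1_{c(i)}$ by suitable permutation unitaries (permuting coordinates within each block, and the effect on $\mathcal E_\mathcal A$) does not leave the algebra's unitary orbit and does realize every decreasing rearrangement, so that nothing is lost when we pass to $\lambda(A)^\downarrow$. This is the analogue of Remark \ref{com con mayo usual} for general $\mathbf l$, and once it is noted the rest is bookkeeping. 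I would therefore spend most of the written proof on that permutation argument and dispatch both inclusions in a sentence each by citing the NC-Schur-Horn theorem.
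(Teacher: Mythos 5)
Your argument is correct and is essentially the one the paper intends: the corollary is stated there without proof precisely because it is the unwinding you describe, namely that $U^*AU=\mathcal E_\mathcal A(V^*BV)$ for some unitaries is the condition $A\in\U_n(\mathcal E_\mathcal A(\U_n(B)))$, which the NC-Schur-Horn theorem (plus Lemma \ref{buena defi}) converts into the spectral condition $\lambda(A)\in M_B(\mathbf l)$, using that isospectral hermitian matrices are unitarily equivalent. The only remark worth making is that the permutation argument you single out as the main obstacle is not really needed: every $\mu\in M_B(\mathcal A)$ is by construction already of the form $[\lambda(A_i\otimes 1_{c(i)})]_{i=1}^m$ for some element of $\mathcal A$, so passing to $\mu^\downarrow$ costs only one unitary conjugation.
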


\section{Extended majorization in $\matsa$}\label{noentiendo}

In this section, using the previous results, we present an spectral relation between selfadjoint matrices that extends majorization. For other extensions of majorization, the so called joint majorizations, see \cite{FPL}.  

\subsection{Definition of extended majorization and basic properties}

\begin{fed}[Extended majorization and submajorization]\label{la defi de mayo}
Let $A,\,B\in\matsa$ and let $\mathbf l=(d(i),c(i))_{i=1}^m\in
(\NN^2)^m$ such that $\sum_{i=1}^m d(i)\cdot c(i)=n$.
 We say that $B$ $\mathbf l$-majorizes $A$, denoted $A\prec_\mathbf l B$
iff
$$ \U_n(A)\cap \mathcal E_\mathcal A(\U_n(B))\neq \emptyset \ \ \text{ or equivalently } \ \ A\in \U_n( \mathcal E_\mathcal A(\U_n(B)))$$
for any (and then every) unital $*$-subalgebra $\mathcal A\subseteq
\mat$ with spectral list $\mathbf l$.

%que ya has leido!!!

If we further assume that $A,\, B\in \matpos$ then we say that $B$
$\mathbf l$-submajorizes $A$, denoted $A\prec_{\mathbf l, \, w}B$
iff
 $$\U_n(A)\cap  \mathcal E_\mathcal A(\C_n(B))\neq \emptyset \ \ \text{ or
equivalently } \ \ A\in \U_n( \mathcal E_\mathcal A(\C_n(B)))$$ for
any (and then every) unital $*$-subalgebra $\mathcal A\subseteq
\mat$ with spectral list $\mathbf l$.

\end{fed}

Note that Lemma \ref{buena defi} is the statement that $\mathbf
l$-majorization and $\mathbf l$-submajorization are actually well defined

It is implicit in Definition \ref{la defi de mayo} that these notions are actually well defined  up to equivalence of spectral lists: given $A,\,B\in \matsa$ (resp
$A,\,B\in \matpos$) and $\mathbf l=(d(i),c(i))_{i=1}^m$ with
$\sum_{i=1}^m d(i)\,c(i)=n$ then $A\prec _\mathbf l B$ if and only
if $A\prec_{\mathbf l(\sigma)}B$ (resp $A\prec _{\mathbf l,w} B$ if
and only if $A\prec_{\mathbf l(\sigma),w}B$) for any (every)
$\sigma\in \mathbb S_m$, where $\mathbf l(\sigma)=(
d(\sigma(i)),c(\sigma(i)))_{i=1}^m$.

\begin{rem}\label{mayo depende de Klyachko}
As a consequence of the NC-Schur-Horn theorem and Corollary
\ref{mayo es espectral} we conclude that $\mathbf
l$-(sub)majorization is an spectral relation that can be described
explicitly in terms of Klyachko's compatibility inequalities. On the
other hand, majorization in $\matsa$ in the sense of Ando
corresponds to $\mathbf l$-majorization for the list $\mathbf
l=(1,1)_{i=1}^n$ and hence the $\mathbf l$-majorization is an
extension of usual majorization.\end{rem}

We shall need the following notion of refinement between
multiplicity free lists. Given $\mathbf l_1=(d_1(i),1)_{i=1}^m$,
$\mathbf l_2=(d_2(i),1)_{i=1}^t$ such that $\sum_{i=1}^m d_1(i)=\sum_{i=1}^t d_2(i)=n$ we say that $\mathbf l_1$ refines
$\mathbf l_2$ if there exist unital $*$-subalgebras $\mathcal
A\subseteq \mathcal B\subseteq \mat$ such that $\mathcal A$ has
spectral list $\mathbf l_1$ and $\mathcal B$ has spectral list
$\mathbf l_2$. It is clear that  $\mathbf l_1$ refines $\mathbf l_2$
if and only if there exists a partition $\{ D(i)\}_{i=1}^ t$ of the
set $\{1,\ldots,m\}$ such that $$ \sum_{i\in D(k)} d_1(i) =d_2(k) ,
\ \ 1\leq k\leq t.$$ Notice that every  multiplicity free list is
refined by the spectral list of a maximal abelian $*$-subalgebra of
$\mat$.

\begin{pro}\label{cororo} Let
$\mathbf l_1=(d_1(i),1)_{i=1}^m$, $\mathbf l_2=(d_2(i),1)_{i=1}^t$
be multiplicity free lists such that $\sum_{i=1}^m
d_1(i)=\sum_{i=1}^t d_2(i)=n$. If we assume that $\mathbf l_1$
refines $\mathbf l_2$ then $\mathbf l_2$-(sub)majorization implies
$\mathbf l_1$-(sub)majorization. In particular, $\mathbf
l_1$-(sub)majorization is a reflexive and antisymmetric relation modulo unitary
equivalence.
\end{pro}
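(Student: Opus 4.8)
I would deduce the whole statement from two elementary properties of the trace preserving conditional expectations involved, together with the combinatorial description of refinement. The first is the \emph{tower rule}: if $\mathcal{A}\subseteq\mathcal{B}$ are unital $*$-subalgebras of $\mat$, then $\mathcal{E}_{\mathcal{A}}\circ\mathcal{E}_{\mathcal{B}}=\mathcal{E}_{\mathcal{A}}$, which is immediate since $\mathcal{E}_{\mathcal{A}}$ and $\mathcal{E}_{\mathcal{B}}$ are the orthogonal projections of $(\mat,\langle\cdot,\cdot\rangle_{\tr})$ onto the nested subspaces $\mathcal{A}\subseteq\mathcal{B}$. The second is the \emph{bimodule rule}: for every unitary $U\in\mathcal{B}$ one has $\mathcal{E}_{\mathcal{B}}(U^*XU)=U^*\mathcal{E}_{\mathcal{B}}(X)U$; this follows from the characterizing identities \eqref{defi esp cond} by conjugating the test elements by $U$ and invoking uniqueness (and, likewise, $\mathcal{E}_{\mathcal{B}}$ is $*$-preserving). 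Using Lemma \ref{buena defi} and the description of refinement in terms of a partition $\{D(k)\}_{k=1}^{t}$ of $\{1,\dots,m\}$, I would fix once and for all \emph{coordinate} representatives $\mathcal{A}\subseteq\mathcal{B}\subseteq\mat$ with spectral lists $\mathbf{l}_1$ and $\mathbf{l}_2$; thus $\mathcal{B}=\bigoplus_{k=1}^{t}\mathcal{M}_{d_2(k)}(\CC)$ and, crucially, every diagonal matrix belongs to $\mathcal{A}$.

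The core is the refinement implication. Suppose $A\prec_{\mathbf{l}_2}B$ with $A,B\in\matsa$, so that there is $V\in\U(n)$ with $C:=\mathcal{E}_{\mathcal{B}}(V^*BV)$ unitarily equivalent to $A$. Write $C=\bigoplus_{k=1}^{t}C_k$ in the block decomposition of $\mathcal{B}$; each $C_k$ is self-adjoint, so I pick $U_k\in\U(d_2(k))$ with $U_k^*C_kU_k$ diagonal and set $U:=\bigoplus_{k=1}^{t}U_k$, a unitary element of $\mathcal{B}$. By the bimodule rule, $\mathcal{E}_{\mathcal{B}}\big((VU)^*B(VU)\big)=U^*\mathcal{E}_{\mathcal{B}}(V^*BV)\,U=U^*CU=:D$ is a diagonal matrix, hence $D\in\mathcal{A}$; by the tower rule, $\mathcal{E}_{\mathcal{A}}\big((VU)^*B(VU)\big)=\mathcal{E}_{\mathcal{A}}(D)=D$. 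Since $D=U^*CU$ is unitarily equivalent to $C$, and hence to $A$, and since $VU\in\U(n)$, this exhibits $A\in\U_n\big(\mathcal{E}_{\mathcal{A}}(\U_n(B))\big)$, i.e. $A\prec_{\mathbf{l}_1}B$. The submajorization statement is proved in exactly the same way: if $A,B\in\matpos$ and $A\prec_{\mathbf{l}_2,w}B$, take $V$ to be a contraction; then $C\ge0$, $D\ge0$, $VU$ is again a contraction, and the identical chain yields $A\prec_{\mathbf{l}_1,w}B$.

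Reflexivity and antisymmetry modulo unitary equivalence then follow from this implication alone. Put $\mathbf{l}_0:=(1,1)_{i=1}^{n}$, the spectral list of a maximal abelian $*$-subalgebra; it refines every multiplicity free list, and $\mathbf{l}_0$-(sub)majorization is ordinary vector (sub)majorization (Remark \ref{mayo depende de Klyachko}). Since $\lambda(A)\prec\lambda(A)$ always (and $\lambda(A)\prec_{w}\lambda(A)$ when $A\ge0$), one has $A\prec_{\mathbf{l}_0}A$, and the refinement implication (with $\mathbf{l}_0$ refining $\mathbf{l}_1$) gives $A\prec_{\mathbf{l}_1}A$, resp. $A\prec_{\mathbf{l}_1,w}A$. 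If moreover $A\prec_{\mathbf{l}_1}B$ and $B\prec_{\mathbf{l}_1}A$, the same implication yields $A\prec_{\mathbf{l}_0}B$ and $B\prec_{\mathbf{l}_0}A$, i.e. $\lambda(A)\prec\lambda(B)$ and $\lambda(B)\prec\lambda(A)$; hence all partial sums of the non-increasing rearrangements agree, $\lambda(A)=\lambda(B)$, and $A$ and $B$ are unitarily equivalent. The submajorization case is identical with $\prec_{w}$ throughout.

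I expect the only genuine obstacle to be the careful simultaneous verification of the tower and bimodule rules and the observation that a diagonal matrix sits inside the coordinate algebra $\mathcal{A}$; once these are in hand the rest is bookkeeping. Put differently, the single idea in the argument is that the $\mathcal{B}$-block-compression $\mathcal{E}_{\mathcal{B}}(V^*BV)$ can be rotated to diagonal form by a conjugation by a unitary \emph{of $\mathcal{B}$}, which by the bimodule rule merely conjugates the value of $\mathcal{E}_{\mathcal{B}}$ by that same unitary (changing neither the spectrum nor the fact that a diagonal matrix is already fixed by the smaller expectation $\mathcal{E}_{\mathcal{A}}$), so that the $\mathbf{l}_2$-datum is transported to an $\mathbf{l}_1$-datum at no cost.
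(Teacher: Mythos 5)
Your proof of the central refinement implication is correct and is essentially the paper's own argument: the paper also composes the two conditional expectations via the tower rule $\mathcal E_{\mathcal A}\circ\mathcal E_{\mathcal B}=\mathcal E_{\mathcal A}$ and reduces to the case where the $\mathcal B$-compression is diagonal, hence lies in a masa $\mathcal D\subseteq\mathcal A$ (the paper phrases your diagonalization-by-a-unitary-of-$\mathcal B$ step as a ``without loss of generality,'' which is justified by exactly the bimodule identity you spell out). Your antisymmetry argument also matches the paper: since the masa list $\mathbf l_0=(1,1)_{i=1}^n$ refines $\mathbf l_1$, the implication you proved gives that $\mathbf l_1$-(sub)majorization implies ordinary (sub)majorization, whose antisymmetry modulo unitary equivalence finishes the job.

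There is, however, a reversed implication in your reflexivity step. The refinement implication says: if $\mathbf l$ refines $\mathbf l'$, then $\mathbf l'$-majorization implies $\mathbf l$-majorization, i.e.\ the \emph{coarser} relation implies the \emph{finer} one. Instantiating it with ``$\mathbf l_0$ refines $\mathbf l_1$'' yields only ``$A\prec_{\mathbf l_1}A\Rightarrow A\prec_{\mathbf l_0}A$,'' so you cannot pass from $A\prec_{\mathbf l_0}A$ to $A\prec_{\mathbf l_1}A$ as written; and the converse implication is genuinely false in general (ordinary majorization $A\prec B$ does not imply $A\prec_{\mathbf l_1}B$ for nontrivial block sizes --- this is precisely the failure of convexity in Proposition \ref{no conv}). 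The conclusion you want is still true and easy to repair: either apply the refinement implication with the \emph{trivial} list $\mathbf t=(n,1)$, which every multiplicity free list refines and for which $A\prec_{\mathbf t}A$ is immediate since $\mathcal E_{\mat}$ is the identity; or argue directly, choosing $U\in\U(n)$ with $U^*AU$ diagonal, so that $\mathcal E_{\mathcal A}(U^*AU)=U^*AU\in\U_n(A)$ for any coordinate algebra $\mathcal A$ with multiplicity free list $\mathbf l_1$.
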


\begin{proof}
Let $\mathcal A\subseteq \mathcal B\subseteq \mat$ be unital
$*$-subalgebras with spectral lists $\mathbf l_1$ and $\mathbf l_2$
respectively. Let $\mathcal E_\mathcal A$, $\mathcal E_\mathcal B$
be the corresponding TCE onto $\mathcal A$ and $\mathcal B$. Notice
that in this case we have $\mathcal E_\mathcal A \circ \mathcal
E_\mathcal B=\mathcal E_\mathcal A$. Moreover, since $\mathcal A$ is
multiplicity free then there exists a maximal abelian $*$-subalgebra
of $\mat$, denoted by $\mathcal D$, such that $\mathcal D\subseteq
\mathcal A$. If we denote by $\mathcal E_\mathcal D$ the TCE onto
$\mathcal D$ then $\mathcal E_\mathcal A \circ \mathcal E_\mathcal D
 =\mathcal E_\mathcal D$.

Let $A,\,B\in \matsa$ and let $U,\,V\in \U(n)$ be such that
$\mathcal E_\mathcal B(U^*B\,U)=V^*A\,V$. Without loss of
generality, we can assume that $V^*AV\in\mathcal D$. Then, $$
\mathcal E_\mathcal A\circ \mathcal E_\mathcal B( U^*B\,U)=\mathcal
E_\mathcal A(V^*A\,V)=V^*A\,V$$ since $V^*A\,V\in \mathcal D$. A
similar argument shows the submajorization statement. As a
consequence of the argument above, we conclude that $\mathbf
l_1$-(sub)majorization implies $\mathbf l$-(sub)majorization, where
$\mathbf l=(1,1)_{i=1}^n$ i.e. usual majorization. This last fact
implies the antisymmetry of $\mathbf l_1$-(sub)majorization.
\end{proof}

It is clear that $\mathbf l$-(sub)majorization, for lists which are
not multiplicity free, is not reflexive nor antisymmetric in
general. Because of these facts, in what follows we shall focus
$\mathbf l$-(sub)majorization for multiplicity free lists. On the
other hand, the question of transitivity of $\mathbf
l$-(sub)majorization for a general list is open.

\subsection{Extended submajorization and convex functions}

Given $A,\,B\in \matsa$ we say that $A$ is
\emph{spectrally dominated} by $B$, denoted $A\lesssim B$, if
$\lambda(B)_i\geq \lambda(A)_i$ for $1\leq i \leq n$. In this
context it is straightforward that, given $A,\,B\in \matpos$ then
$A\lesssim B$ if and only if there exists a contraction $V\in \mat$
such that $V^*BV=A$; but note that this last equation is, by
definition, $A\prec_{\mathbf t,\,w} B$ for the trivial list $\mathbf
t=(n,1)$. Any multiplicity free list $\mathbf l=(d(i),1)_{i=1}^m$
with $\sum_{i=1}^m d(i)=n$, refines (as defined before Proposition
\ref{cororo}) the list $\mathbf t$. Hence, by Proposition
\ref{cororo}, we get the equivalence: for $A,\,B\in \matpos$,
\begin{equation}\label{hecho import}A\lesssim B
 \ \text{ if and only if }\  A\prec_{\mathbf l,\,w} B\end{equation}
 for every multiplicity free list $\mathbf l$ as above.

\begin{pro}[Jensen's inequality]\label{jensen ineq}
Let $f:(\alpha,\beta)\rightarrow [0,\infty)$ be a monotone convex
function and let $A\in \matsa$ be such that the spectrum of $A$ is
contained in $(\alpha,\beta)$. Then, for every system of coordinate
projections $\cP=\{P_i\}_{i=1}^t$ and for every multiplicity free
list $\mathbf l=(d(i),1)_{i=1}^m$ with $\sum_{i=1}^md(i)=n$ $$
f(\C_\cP(A))\prec_{\mathbf l,\,w} \C_\cP(f(A)) .$$
\end{pro}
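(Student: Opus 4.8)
By \eqref{hecho import}, the conclusion $f(\C_\cP(A))\prec_{\mathbf l,\,w}\C_\cP(f(A))$ for every multiplicity free list $\mathbf l$ is \emph{equivalent} to the single spectral domination statement $f(\C_\cP(A))\lesssim \C_\cP(f(A))$, i.e.\ to the eigenvalue inequalities $\lambda_k\big(\C_\cP(f(A))\big)\geq \lambda_k\big(f(\C_\cP(A))\big)$ for $1\leq k\leq n$. So the whole proposition reduces to proving this family of scalar inequalities, and the list $\mathbf l$ disappears from the real content. First I would record this reduction explicitly. Since $f\geq 0$ on $(\alpha,\beta)$ and $\sigma(A)\subseteq(\alpha,\beta)$, both $f(A)$ and $f(\C_\cP(A))$ are positive semidefinite (note $\C_\cP(A)$ is a block-diagonal compression of $A$, so its spectrum also lies in the convex hull of $\sigma(A)\subseteq(\alpha,\beta)$, and $f$ is defined there), which makes the spectral-domination formulation legitimate.

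**Reduction to one block and to Jensen for compressions.** Next I would reduce to a single coordinate projection. Writing $\C_\cP(X)=\oplus_{i=1}^t P_iXP_i$, both sides of $f(\C_\cP(A))\lesssim\C_\cP(f(A))$ split as direct sums over the blocks: $f(\C_\cP(A))=\oplus_i f(P_iAP_i|_{P_i})$ and $\C_\cP(f(A))=\oplus_i P_i f(A)P_i|_{P_i}$. Because eigenvalue domination between direct sums of the \emph{corresponding} summands implies eigenvalue domination of the sums (the sorted eigenvalue list of $\oplus C_i$ dominates that of $\oplus D_i$ whenever each $C_i\succsim D_i$ in the spectral-domination order — this is an elementary sorting/interlacing fact, which I would state as a short lemma), it suffices to prove, for a single projection $P$ with range $\mathcal K$ and the compression map $X\mapsto PXP|_{\mathcal K}$, the operator-Jensen inequality
$$ f\big(PAP|_{\mathcal K}\big)\ \leq\ P f(A)P|_{\mathcal K}. $$
This is precisely the classical operator Jensen inequality for the unital completely positive map $\Phi(X)=PXP|_{\mathcal K}$, valid for \emph{operator convex} $f$; but here we only assume $f$ is an ordinary monotone convex function, so the inequality need not hold as an operator inequality. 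This is the main obstacle and the reason the spectral (not operator) formulation is essential.

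**Getting around operator convexity via eigenvalue inequalities.** The resolution: we do not need the operator inequality, only the eigenvalue domination $\lambda_k(Pf(A)P|_{\mathcal K})\geq \lambda_k(f(PAP|_{\mathcal K}))$. I would prove this via the minimax (Courant–Fischer / Ky Fan) principle. Fix $k$ and let $\mathcal M\subseteq\mathcal K$ be the $k$-dimensional subspace spanned by the top $k$ eigenvectors of $PAP|_{\mathcal K}$; for unit $\xi\in\mathcal M$ one has, with $\mu=\langle A\xi,\xi\rangle=\langle PAP\,\xi,\xi\rangle\geq\lambda_k(PAP|_{\mathcal K})$ (and $\mu\in(\alpha,\beta)$), the scalar Jensen inequality $f(\langle A\xi,\xi\rangle)\leq\langle f(A)\xi,\xi\rangle$ valid for convex $f$, hence $\langle Pf(A)P\,\xi,\xi\rangle=\langle f(A)\xi,\xi\rangle\geq f(\mu)\geq f(\lambda_k(PAP|_{\mathcal K}))=\lambda_k(f(PAP|_{\mathcal K}))$, the last inequality using monotonicity of $f$. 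Taking the minimum over unit $\xi\in\mathcal M$ and then the max over all $k$-dimensional subspaces gives $\lambda_k(Pf(A)P|_{\mathcal K})\geq\min_{\xi\in\mathcal M,\ \|\xi\|=1}\langle Pf(A)P\,\xi,\xi\rangle\geq\lambda_k(f(PAP|_{\mathcal K}))$, as desired. Assembling: this per-block inequality, the direct-sum lemma, and the reduction via \eqref{hecho import} yield the proposition. The one point requiring a little care is that scalar Jensen as used above needs $f$ convex on an interval containing all the relevant numerical-range values $\langle A\xi,\xi\rangle$, which it does since these lie in the convex hull of $\sigma(A)\subseteq(\alpha,\beta)$; monotonicity of $f$ is used exactly once, to pass from $f(\mu)$ with $\mu\geq\lambda_k(PAP|_{\mathcal K})$ down to $f(\lambda_k(PAP|_{\mathcal K}))$.
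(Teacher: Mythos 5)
Your overall strategy matches the paper's at the top level: both reduce, via \eqref{hecho import}, to the single spectral-domination statement $f(\C_\cP(A))\lesssim\C_\cP(f(A))$ between positive semidefinite matrices, after which the list $\mathbf l$ plays no further role. The difference is that the paper simply quotes this inequality from Theorem 3.1 of \cite{JD}, whereas you supply a self-contained proof: split both sides as direct sums over the blocks of $\cP$, observe that blockwise spectral domination passes to direct sums, and prove the per-block eigenvalue inequality by Courant--Fischer together with the scalar Jensen inequality $f(\langle A\xi,\xi\rangle)\leq\langle f(A)\xi,\xi\rangle$. That is a genuinely more informative route (it makes the proposition independent of \cite{JD}), and the structural steps --- the positivity check needed to invoke \eqref{hecho import}, the direct-sum lemma, the minimax computation --- are sound.

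There is, however, one step that fails as written. The hypothesis is that $f$ is \emph{monotone} convex, which includes non-increasing $f$ (e.g.\ $f(x)=e^{-x}$), and your final step uses monotonicity precisely to pass from $\mu\geq\lambda_k(PAP|_{\mathcal K})$ to $f(\mu)\geq f(\lambda_k(PAP|_{\mathcal K}))$; for non-increasing $f$ this inequality reverses. Moreover, in that case the identification $\lambda_k(f(PAP|_{\mathcal K}))=f(\lambda_k(PAP|_{\mathcal K}))$ is also false: the $k$-th largest eigenvalue of $f(PAP|_{\mathcal K})$ is $f$ of the $k$-th \emph{smallest} eigenvalue of $PAP|_{\mathcal K}$. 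The repair is routine: writing $d=\dim\mathcal K$, in the decreasing case take $\mathcal M$ to be the span of the eigenvectors of $PAP|_{\mathcal K}$ for the $k$ smallest eigenvalues, so that $\mu=\langle A\xi,\xi\rangle\leq\lambda_{d-k+1}(PAP|_{\mathcal K})$ for every unit $\xi\in\mathcal M$, whence $\langle f(A)\xi,\xi\rangle\geq f(\mu)\geq f(\lambda_{d-k+1}(PAP|_{\mathcal K}))=\lambda_k(f(PAP|_{\mathcal K}))$, and the minimax step goes through unchanged. With that case added your argument is complete.
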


\begin{proof}
Let $f$ be a monotone convex function and let $\mathcal P$  be a
system of coordinate projections as above. As a consequence of
theorem 3.1 in \cite{JD} we get that $f(\C_\cP(A))\lesssim
\C_\cP(f(A))$. The result now follows from \eqref{hecho import}.
\end{proof}

Let $f:[0,\infty)\rightarrow [0,\infty)$ be a convex function with $f(0)=0$,  and
hence non-decreasing. If $A,\,B\in \matpos$ are such that $A\prec_w B$ in
the sense of Ando, Theorem \ref{prelimcsh} implies that $f(A)\prec_w
f(B)$ i.e. $f$ is monotonic with respect to submajorization.
Indeed, if $g:[0,\infty)\rightarrow \RR$ is an arbitrary
non-decreasing convex function then $g\circ f$ is again
non-decreasing and convex. Therefore by hypothesis and
Theorem \ref{prelimcsh} we have $$\tr(g(f(A)))=\tr( g\circ f(A))\leq
\tr(g\circ f(B))=\tr(g(f(B))).$$ Since $g$ was arbitrary, again by
Theorem \ref{prelimcsh}, we get that $f(A)\prec_w f(B)$. The next
result is a generalization of this fact to the context of $\mathbf
l$-submajorization for multiplicity free lists $\mathbf l$.
\begin{pro}\label{monotonia} Let $f:[0,\infty)\rightarrow [0,\infty)$ be a convex
function with $f(0)=0$ and let $\cP=\{P_i\}_{i=1}^m$ be a system of coordinate projections with rank$(P_i)=d(i)$, $1\leq i\leq m$. Let $V\in\mat$ be such that $\|W\|\leq 1$ and $A,\,B\in\matpos$ be such that $\cC_\cP(W^* B \,W)=A$. Then, there exists $\tilde W\in \mat$ with $\|\tilde W\|\leq 1$ and such that $\cC_\cP(\tilde{W} ^* f(B)\,\tilde W)=f(A)$.
\end{pro}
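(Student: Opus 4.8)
The plan is to reduce the statement to the spectral characterisation in Theorem \ref{teo contrac sh}, applied to the positive matrix $f(B)$, and to verify the Klyachko inequalities required for $f(B)$ by ``pushing $f$ through'' the corresponding inequalities for an auxiliary compression of $f(B)$. Note first that, since $f\ge 0$, $f(0)=0$ and $f$ is convex, $f$ is non-decreasing (as recorded in the paragraph preceding the statement); this monotonicity will be used several times.

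First I would write $A=\C_\cP(W^*BW)=\oplus_{i=1}^m A_i$ with $A_i\in\mathcal M_{d(i)}(\CC)^+$; since $\C_\cP(X)=\sum_i P_iXP_i$, the block $A_i$ is exactly the compression of $C:=W^*BW\ge 0$ to $\mathrm{ran}\,P_i$. Because $\|W\|\le 1$ we have $C\lesssim B$ (this is the characterisation of $\lesssim$ between positive matrices recalled in the excerpt), hence $f(C)\lesssim f(B)$ by monotonicity of $f$, and hence $f(C)=Z^*f(B)Z$ for some contraction $Z$. Therefore, setting $D_i:=P_if(C)P_i\in\mathcal M_{d(i)}(\CC)^+$, we get $\oplus_{i=1}^m D_i=\C_\cP(f(C))=\C_\cP(Z^*f(B)Z)$; that is, condition \eqref{1} of Theorem \ref{teo contrac sh} holds for $S=f(B)$ with blocks $D_i$, so by the equivalence of \eqref{1} and \eqref{4} in that theorem the $(m+1)$-tuple $(\lambda(f(B)),(\lambda(D_1),0_{n-d(1)}),\dots,(\lambda(D_m),0_{n-d(m)}))$ satisfies Klyachko's compatibility inequalities together with $\tr(f(B))\ge\sum_i\tr(D_i)$.

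The key estimate is the coordinatewise bound $\lambda_k(f(A_i))\le\lambda_k(D_i)$ for $1\le k\le d(i)$, i.e. $f(A_i)=f(P_iCP_i)\lesssim P_if(C)P_i=D_i$. This is the compression form of Jensen's operator inequality for a monotone convex function; it follows from a short min--max argument: for a unit vector $v\in\mathrm{ran}\,P_i$ the scalar Jensen inequality gives $\langle f(C)v,v\rangle\ge f(\langle Cv,v\rangle)$, and feeding this into the Courant--Fischer formulas for $\lambda_k(P_if(C)P_i)$ and $\lambda_k(P_iCP_i)$, together with the monotonicity of $f$, yields $\lambda_k(D_i)\ge f(\lambda_k(A_i))=\lambda_k(f(A_i))$ (one may also quote the compression version of \cite[Thm.~3.1]{JD} used in Proposition \ref{jensen ineq}). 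With this in hand the proof finishes: in Klyachko's inequalities \eqref{comp K} each spectral vector occurs only through the nonnegative subset sums $\mu^j[I_j']$, so replacing the $0$-padded vector $(\lambda(D_i),0_{n-d(i)})$ by the coordinatewise smaller $(\lambda(f(A_i)),0_{n-d(i)})$ only decreases the right-hand sides; combined with $\sum_i\tr(f(A_i))\le\sum_i\tr(D_i)\le\tr(f(B))$ this shows that $(\lambda(f(B)),(\lambda(f(A_1)),0_{n-d(1)}),\dots,(\lambda(f(A_m)),0_{n-d(m)}))$ satisfies Klyachko's compatibility inequalities plus the trace inequality. Applying the implication \eqref{4}$\Rightarrow$\eqref{1} of Theorem \ref{teo contrac sh} to $S=f(B)\in\matpos$ with blocks $f(A_i)\in\mathcal M_{d(i)}(\CC)^+$ then produces a contraction $\tilde W$ with $\C_\cP(\tilde W^*f(B)\tilde W)=\oplus_i f(A_i)=f(\oplus_i A_i)=f(A)$, as asserted.

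The only non-formal ingredient is the compression--Jensen estimate $f(P_iCP_i)\lesssim P_if(C)P_i$; everything else is bookkeeping with Theorem \ref{teo contrac sh} and with the monotone spectral order $\lesssim$, so that is the step I expect to be the crux. It is also the step that matters for getting the \emph{exact} identity: feeding the hypothesis directly into Proposition \ref{jensen ineq} only yields $f(A)\prec_{\mathbf l,\,w}f(B)$, which a priori gives the conclusion only up to a global unitary conjugation that may mix the blocks; it is precisely the block-wise domination $\lambda_k(f(A_i))\le\lambda_k(D_i)$ that lets Klyachko's inequalities transfer block by block and produces $\C_\cP(\tilde W^*f(B)\tilde W)=f(A)$ on the nose.
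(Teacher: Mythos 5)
Your argument is correct, but it takes a genuinely different route from the paper's. The paper invokes the Aujla--Silva operator inequality from \cite{sil} (for a contraction $X$ there is a unitary $V$ with $f(X^*BX)\leq V^*X^*f(B)XV$), applies it to $X=WP_i$, rewrites $P_iW^*f(B)WP_i$ as a unitary conjugate of $f(B)^{1/2}WP_iW^*f(B)^{1/2}$ via the $TT^*\sim T^*T$ trick, and sums over $i$ using $\sum_i WP_iW^*=WW^*\leq 1_n$ to land directly in condition \eqref{2} of Theorem \ref{teo contrac sh}; only the implication \eqref{2}$\Rightarrow$\eqref{1} of that theorem is then needed. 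You instead work entirely at the spectral level: you pass through condition \eqref{4} twice --- forward for the auxiliary blocks $D_i=P_if(W^*BW)P_i$ (which you legitimately realize as $\C_\cP(Z^*f(B)Z)$ via $f(W^*BW)\lesssim f(B)$), and backward for the target blocks $f(A_i)$ --- bridged by the blockwise Jensen estimate $f(P_iCP_i)\lesssim P_if(C)P_i$ and the observation that Klyachko's inequalities \eqref{comp K} and the trace inequality are stable under coordinatewise decrease of the summand spectra. Your min--max derivation of the blockwise Jensen estimate is sound (and rightly done directly for each $P_i$, since the global relation $f(\C_\cP(C))\lesssim\C_\cP(f(C))$ would not by itself localize to blocks), and the coordinatewise-monotonicity of the subset sums $\lambda^j[I_j']$ is exactly what is needed. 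What each approach buys: the paper's proof bypasses the Klyachko machinery entirely, using only the elementary direction \eqref{2}$\Rightarrow$\eqref{1} of Theorem \ref{teo contrac sh} at the cost of importing the external operator inequality of \cite{sil}; yours is self-contained modulo Theorem \ref{teo contrac sh} itself but leans on both directions of the deep equivalence \eqref{1}$\Leftrightarrow$\eqref{4}, i.e.\ on the full Klyachko--Friedland--Fulton theorem.
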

\begin{proof}
Let $A,\,B\in \matpos$ be such that $A\prec_{\mathbf l,\,w} B$ and
let $f$ be as above. We assume that
$\C_\cP(W^*BW)=\oplus_{i=1}^m A_i=A$ for a contraction $W\in \mat$.
 We shall need the following result from \cite{sil}: if $X\in\mat$ is a
contraction then there exists $V\in \U(n)$ such that $f(X^*BX)\leq
V^*X^*f(B)XV$. Fix $1\leq i\leq m$. Using the previous result and
fact that for every $T\in \mat$ then $TT^*$ and $T^*T$ are unitarily
equivalent, we conclude that there there exist $U_i,\,V_i\in
\U(n)$ such that
$$f((P_i W^*)B(W  P_i))\leq V_i^* P_i W^*f(B)W P_i V_i = U^*_i f(B)^{1/2}W
P_iW^* f(B)^{1/2} U_i.$$ Then \begin{eqnarray*}f(B)&\geq&
\sum_{i=1}^m f(B)^{1/2}W P_i W^* f(B)^{1/2}\\ &\geq& \sum_{i=1}^m
U_i f(P_i W^*BW P_i) U_i^*=\sum_{i=1}^m U_i (\oplus_{j=1}^m
\delta_{ij} \, f(A_i)) U_i^*\end{eqnarray*} and the proposition now
follows from Theorem \ref{teo contrac sh}.
\end{proof}

\begin{cor}
Let $\mathbf l=((d(i),1))_{i=1}^m$ be a multiplicity free list with $\sum_{i=1}^m d(i)=n$. If $A,\,B\in \matpos$ then the following statements are equivalent:
\begin{enumerate}
\item $A\prec_{\mathbf l,w}B$.
\item For every convex function $f:[0,\infty)\rightarrow [0,\infty)$ with $f(0)=0$  we have
$f(A)\prec_{\mathbf l,\,w}f(B)$.
\end{enumerate} In particular, every convex function $f:[0,\infty)\rightarrow [0,\infty)$ with $f(0)=0$ is monotonic with respect to $\mathbf l$-submajorization.
\end{cor}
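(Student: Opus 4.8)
The plan is to obtain the substantive implication directly from Proposition~\ref{monotonia} and the reverse implication by a one-line specialization.

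First I would prove $(1)\Rightarrow(2)$. Fix a convex $f:[0,\infty)\to[0,\infty)$ with $f(0)=0$; as observed just above, such an $f$ is automatically non-decreasing, and since $A,B\in\matpos$ have spectrum in $[0,\infty)$ the matrices $f(A),f(B)$ again lie in $\matpos$, so the relation $f(A)\prec_{\mathbf l,\,w}f(B)$ makes sense. Because $\mathbf l=((d(i),1))_{i=1}^m$ is multiplicity free, I would realize it by the algebra $\mathcal A=\oplus_{i=1}^m\mathcal M_{d(i)}(\CC)$ attached to a system of coordinate projections $\cP=\{P_i\}_{i=1}^m$ with $\mathrm{rank}(P_i)=d(i)$; for such an $\mathcal A$ the TCE $\mathcal E_\mathcal A$ coincides with the compression $\C_\cP$ (this is the case $c(i)\equiv 1$ of \eqref{la esp}). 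By Definition~\ref{la defi de mayo}, from $A\prec_{\mathbf l,\,w}B$ we obtain some $A'\in\U_n(A)$ with $A'=\C_\cP(W^*BW)$ for a contraction $W\in\mat$. Applying Proposition~\ref{monotonia} to the pair $A',B$ and this contraction $W$ yields a contraction $\tilde W\in\mat$ with $\C_\cP(\tilde W^*f(B)\,\tilde W)=f(A')$. Writing $A'=V^*AV$ with $V\in\U(n)$ and using that continuous functional calculus commutes with unitary conjugation, $f(A')=V^*f(A)V\in\U_n(f(A))$. Hence $f(A')\in\U_n(f(A))\cap\mathcal E_\mathcal A(\C_n(f(B)))$, i.e.\ $f(A)\prec_{\mathbf l,\,w}f(B)$.

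The implication $(2)\Rightarrow(1)$ is then immediate: the identity $f(x)=x$ is convex, maps $[0,\infty)$ into $[0,\infty)$ and satisfies $f(0)=0$, so applying $(2)$ to this particular $f$ gives $A=f(A)\prec_{\mathbf l,\,w}f(B)=B$. The final ``in particular'' assertion is just $(1)\Rightarrow(2)$ read back: $A\prec_{\mathbf l,\,w}B$ forces $f(A)\prec_{\mathbf l,\,w}f(B)$ for every admissible $f$, which is precisely monotonicity of $f$ with respect to $\mathbf l$-submajorization.

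I do not anticipate a genuine obstacle, since the corollary is essentially a repackaging of Proposition~\ref{monotonia}. The only point requiring a little care is matching the orbit-intersection formulation of $\prec_{\mathbf l,\,w}$ in Definition~\ref{la defi de mayo} with the hypothesis $\C_\cP(W^*BW)=A$ of Proposition~\ref{monotonia}, which is handled by conjugating with the unitary $V$ witnessing $A'\in\U_n(A)$ together with the identity $f(V^*AV)=V^*f(A)V$.
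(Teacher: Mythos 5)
Your proof is correct and follows exactly the route the paper intends: the corollary is stated without proof as an immediate consequence of Proposition~\ref{monotonia}, and your argument supplies precisely the missing details (conjugating by the unitary witnessing $A'\in\U_n(A)$, invoking $f(V^*AV)=V^*f(A)V$, and specializing to $f(x)=x$ for the converse).
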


The next result, which follows form our previous arguments, is
theorem 2.1 in \cite{bou} expressed in terms of convex functions.
Its proof illustrates the use of extended majorization.
\begin{cor}\label{coro bou}
Let $f:[0,\infty)\rightarrow[0,\infty)$ be a convex function with
$f(0)=0$ and let $A,\, B\in \matpos$. Then there exist unitary
matrices $U,\,V\in \U(n)$ such that $$U^*f(A)U+ V^* f(B) V\leq
f(A+B).$$
\end{cor}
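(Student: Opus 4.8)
The plan is to deduce this from Proposition \ref{monotonia} together with the contractive NC Schur--Horn theorem (Theorem \ref{teo contrac sh}) applied to the trivial two-block coordinate compression. First I would observe that $A+B$ and the block-diagonal matrix $A\oplus B\in M_{2n}(\CC)^+$ are related by a contractive compression: if $\cP=\{P_1,P_2\}$ is the coordinate system in $M_{2n}(\CC)$ with $P_1,P_2$ of rank $n$, and $W\in M_{2n}(\CC)$ is the isometry $Wx=(x,x)/\sqrt2$ padded suitably (equivalently, take $V=\tfrac1{\sqrt2}\left(\begin{smallmatrix}1_n&1_n\end{smallmatrix}\right)$ as a map $\CC^{2n}\to\CC^n$), then $V^*(A+B)V$ has the $2\times2$ block form whose diagonal blocks are $\tfrac12(A+B)$, and more to the point one can realize $A\oplus B$ itself as a diagonal-block compression of a matrix contractively equivalent to $A+B$. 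The cleanest route: the $3$-tuple $(\lambda(A+B),(\lambda(A),0_n),(\lambda(B),0_n))$ satisfies Klyachko's inequalities and the trace identity $\tr(A+B)=\tr(A)+\tr(B)$, since $A+B$ is literally a sum with the prescribed spectra; hence by item \eqref{iFF} of Theorem \ref{KlFF} (or directly) and the equivalence \eqref{1}$\Leftrightarrow$\eqref{4} of Theorem \ref{teo contrac sh}, there is a contraction $X\in M_{2n}(\CC)$ with $\C_\cP(X^*(A+B)X)=A\oplus B$, where here $A+B$ is viewed inside $M_{2n}(\CC)$ as $(A+B)\oplus 0_n$.

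Next I would apply Proposition \ref{monotonia} with this data: the list $\mathbf l=((n,1),(n,1))$, the system $\cP$, the contraction $X$, and the positive matrices "$B$" $=(A+B)\oplus 0_n$ and "$A$" $=A\oplus B$. Since $f$ is convex with $f(0)=0$, Proposition \ref{monotonia} yields a contraction $\tilde W\in M_{2n}(\CC)$ with $\C_\cP(\tilde W^* f((A+B)\oplus 0_n)\,\tilde W)=f(A\oplus B)=f(A)\oplus f(B)$, using $f(0)=0$ so that $f((A+B)\oplus 0_n)=f(A+B)\oplus 0_n$ and $f(A\oplus B)=f(A)\oplus f(B)$. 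Now unwind this through Theorem \ref{teo contrac sh} again, this time in the direction \eqref{1}$\Rightarrow$\eqref{2}: the existence of such a $\tilde W$ gives unitaries $V_1,V_2\in\U(2n)$ with
$$ f(A+B)\oplus 0_n \;\geq\; V_1^*\big(f(A)\oplus 0_n\big)V_1 + V_2^*\big(0_n\oplus f(B)\big)V_2 . $$
Finally I would compress this $2n\times2n$ inequality back to $M_n(\CC)$. Cutting down by the rank-$n$ projection onto the first block (and using that compression of $V_i^*(\cdot)V_i$ against a projection is of the form $U_i^*(\cdot)U_i$ composed with a compression, as in the manipulations $T_iT_i^*=V_i^*(T_i^*T_i)V_i$ in the proof of Theorem \ref{teo contrac sh}) produces unitaries $U,V\in\U(n)$ with $U^*f(A)U+V^*f(B)V\leq f(A+B)$, which is the claim.

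The step I expect to be the main obstacle is the careful bookkeeping in passing between $M_n(\CC)$ and $M_{2n}(\CC)$: one must check that the compression of $f(A+B)\oplus 0_n$ recovers $f(A+B)$ exactly (this is where $f(0)=0$ is essential), that the off-diagonal blocks do not interfere, and that the unitaries produced by Theorem \ref{teo contrac sh} at size $2n$ can genuinely be replaced by unitaries at size $n$ after compression rather than merely contractions — this requires using, as in the proof of Theorem \ref{teo contrac sh}, the unitary equivalence of $TT^*$ and $T^*T$ to upgrade the contractive intertwiners to unitary ones. Everything else is a direct assembly of Theorem \ref{teo contrac sh}, Theorem \ref{KlFF}, and Proposition \ref{monotonia}.
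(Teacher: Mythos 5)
Your proposal follows exactly the route of the paper's own proof: pass to $M_{2n}(\CC)$, realize $f(A)\oplus f(B)$ as a block compression of a contractive conjugate of $f(A+B)\oplus 0_n$ via Theorem \ref{teo contrac sh} and Proposition \ref{monotonia}, convert to the $2n\times 2n$ operator inequality, and compress back to the $(1,1)$ block. The reduction chain up to the inequality
$$f(A+B)\oplus 0_n \;\geq\; V_1^*\bigl(f(A)\oplus 0_n\bigr)V_1 + V_2^*\bigl(0_n\oplus f(B)\bigr)V_2$$
is correct and is what the paper does (the paper writes the decomposition $(A+B)\oplus 0_n=(A\oplus 0_n)+J(0_n\oplus B)J$ and cites the same two results).

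However, the step you flag as ``the main obstacle'' is genuinely incomplete as you describe it, and the mechanism you propose for it is not by itself sufficient. Writing $V_1=(V_{ij})_{i,j=1}^2$, the $(1,1)$ block of $V_1^*(f(A)\oplus 0_n)V_1$ is $V_{11}^*f(A)V_{11}$ with $V_{11}$ merely a contraction. Setting $T=f(A)^{1/2}V_{11}$, the unitary equivalence of $T^*T$ and $TT^*$ only identifies $V_{11}^*f(A)V_{11}$ with $f(A)^{1/2}V_{11}V_{11}^*f(A)^{1/2}$, which in general is strictly below $f(A)$; so this trick alone does not produce $U^*f(A)U$ on the left-hand side. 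The missing idea is to first compress the $2n\times 2n$ inequality to the $(2,2)$ block: since the right-hand side there is $0$ and both summands on the left are positive, one gets $V_{12}^*f(A)V_{12}=0$, hence $QV_{12}=0$ for $Q$ the range projection of $f(A)$. Combined with $V_{11}V_{11}^*+V_{12}V_{12}^*=1_n$ this gives $QV_{11}V_{11}^*Q=Q$, so $V_{11}^*Q$ is a partial isometry that extends to a unitary $U$ with $U^*f(A)U=V_{11}^*f(A)V_{11}$ (equivalently, only now does $TT^*=f(A)$ so that the $T^*T\sim TT^*$ argument applies); the same argument handles $f(B)$. This is precisely the finishing argument in the paper, and without it your last step does not close; the rest of your proposal is sound.
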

\begin{proof}
Consider the $2n\times 2n$ matrices \begin{equation}\label{eqmayo1}\begin{pmatrix} A+B & 0 \\
0 & 0\end{pmatrix}=\begin{pmatrix} A & 0 \\ 0 & 0\end{pmatrix}+
\begin{pmatrix} 0 & 1 \\ 1 & 0\end{pmatrix}\,
\begin{pmatrix} 0 & 0 \\ 0 &
B\end{pmatrix} \,\begin{pmatrix} 0 & 1 \\ 1 & 0\end{pmatrix}
\end{equation} Let $\mathbf l=((n,1),(n,1))$. By Theorem
\ref{teo contrac sh} and Proposition \ref{monotonia} there exist unitary matrices $\tilde U,\,\tilde
V\in \U(2n)$ such that
\begin{equation}\tilde U^* \begin{pmatrix} f(A) & 0 \\
0 & 0 \end{pmatrix}\tilde U+ \tilde V \begin{pmatrix} 0 & 0 \\
0 & f(B)\end{pmatrix}\tilde V \leq \label{cand}
 \begin{pmatrix} f(A+B) & 0 \\
0 & 0\end{pmatrix} .\end{equation} If $\tilde U=(U_{ij})_{ij=1}^2$
then, by compressing \eqref{cand} to the (2,2) block we get
$U^*_{12} f(A) U_{12}=0$ and hence $QU_{12}=0$ where $Q$ is the
projection onto the range of $f(A)\geq 0$. Therefore
$$U_{11}\,U^*_{11}+U_{12}\,U_{12}^*=1_n \ \Rightarrow Q(
U_{11}\,U^*_{11})Q=Q.$$ Thus, $QU_{11}\in \mat$ is a partial
isometry. If $U\in \U(n)$ is such that $QU=QU_{11}$ then
\begin{equation}\label{hay unita1}
U^*f(A)U=U_{11}^*\,f(A)U_{11}=\left(\tilde U  \begin{pmatrix} f(A) & 0 \\
0 & 0 \end{pmatrix} \tilde U \right)_{11}\end{equation} where the
sub-index $11$ in the right-hand side of this last equation stands
for the $(1,1)$-block. Similarly, there exists $V\in \U(n)$ such
that
\begin{equation}\label{hay unita2}V^*f(B)V=
\left(\tilde V  \begin{pmatrix} 0 & 0 \\
0 & f(B) \end{pmatrix} \tilde V\right)_{11}\end{equation} The
corollary now follows by compressing the inequality \eqref{cand} to
the $(1,1)$-block and using \eqref{hay unita1} and \eqref{hay
unita2}.
\end{proof}

\subsection{A non-commutative Horn's lemma and QIT}

In \cite{rusk} the following problem is posed in the context of
Quantum Information Theory (QIT).
\begin{conj}[from \cite{rusk}] \label{conj5} Let
$A\in\mathcal M_{d\cdot m}(\CC)^{+}$ be a block
matrix $A=(A_{ij})_{i,\,j=1}^m$ with $A_{ij}\in \mathcal M_d(\CC)$
for $1\leq i,\,j\leq m$ and let $M=\sum_{i=1}^m A_{ii}\in \mathcal
M_d(\CC)^+$. Then there exist rectangular matrices $X_i\in \mathcal
M_{d\cdot m,\,d}(\CC)$, $X_i^*=(X_{1\, i}^*,\ldots, X_{m\,i}^*)$
with $X_{i\,j}\in \mathcal M_{d}(\CC)$ for $1\leq i,\,j\leq m$ such
that
\begin{equation}\label{desc rusk}
 A=\frac{1}{m}\,\sum_{i=1}^m X_i X_i^* \ \ \text{ and } \ \
 \sum_{j=1}^m X_{j\,i} \, X_{j\,i}^*=M , \  1\leq i\leq m.
 \end{equation}
\end{conj}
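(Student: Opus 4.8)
The plan is to recast Conjecture \ref{conj5} as a statement about equalizing partial traces among the ``pieces'' of $A$, and then to produce the equalized decomposition by a convexity-plus-symmetrization argument.

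First I would reformulate. Setting $B_i:=\tfrac1m X_iX_i^*$ and recalling $\Tr_m(X_iX_i^*)=\sum_{j=1}^m X_{ji}X_{ji}^*$ and $\Tr_m(A)=M$, one checks at once that the conjecture is equivalent to the existence of $B_1,\dots,B_m\in\mathcal M_{d\cdot m}(\CC)^+$ with $\operatorname{rank}(B_i)\le d$, $\sum_{i=1}^m B_i=A$ and $\Tr_m(B_i)=\tfrac1m\Tr_m(A)$ for every $i$; conversely, from such $B_i$ one recovers admissible $X_i$ by choosing any $X_i\in\mathcal M_{d\cdot m,\,d}(\CC)$ with $X_iX_i^*=mB_i$ (possible because $\operatorname{rank}(B_i)\le d$). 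In the language of section~3.3, with $\mathcal A=\mathcal M_d(\CC)\otimes 1_m$ and $\mathcal E_{\mathcal A}(C)=\tfrac1m\Tr_m(C)\otimes 1_m$, the last constraint reads $\mathcal E_{\mathcal A}(B_i)=\tfrac1m\mathcal E_{\mathcal A}(A)$. Two preliminary remarks: decompositions $A=\sum_iB_i$ with $B_i\ge0$, $\operatorname{rank}(B_i)\le d$ do exist (diagonalize $A$ and group its spectral projections into blocks of size $\le d$); and it suffices to treat invertible $A$, applying the statement to $A+\varepsilon 1$ and letting $\varepsilon\to 0$, since then $0\le B_i^{(\varepsilon)}\le A+\varepsilon 1$ lie in a compact set, $\operatorname{rank}$ is lower semicontinuous, and $\Tr_m$ is continuous.

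Assume $A$ invertible and write $B_i=A^{1/2}Q_iA^{1/2}$ with $Q_i:=A^{-1/2}B_iA^{-1/2}\ge0$. From $B_i\le A$ we get $Q_i\le 1$; from $\operatorname{rank}(Q_i)\le d$ and $\sum_iQ_i=1$ we get $dm=\sum_i\tr(Q_i)\le\sum_i\operatorname{rank}(Q_i)\le dm$, and equality forces each $Q_i$ to be a rank-$d$ orthogonal projection. Hence, for invertible $A$, admissible decompositions correspond bijectively to systems $\{Q_i\}_{i=1}^m$ of rank-$d$ projections with $\sum_iQ_i=1_{dm}$, and we must find one with $\Tr_m(A^{1/2}Q_iA^{1/2})=\tfrac1m\Tr_m(A)$ for all $i$. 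Consider the set
$$\Sigma:=\Bigl\{\bigl(\Tr_m(A^{1/2}Q_iA^{1/2})\bigr)_{i=1}^m:\ \{Q_i\}_{i=1}^m\text{ a rank-}d\text{ projection system with }\textstyle\sum_iQ_i=1_{dm}\Bigr\}\subseteq\bigl(\mathcal M_d(\CC)^{sa}\bigr)^m;$$
it is nonempty, compact, contained in the affine slice $\{(v_i):\sum_iv_i=\Tr_m(A)\}$, and invariant under the $\mathbb S_m$-action permuting coordinates.

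The crux --- and the step I expect to be the main obstacle --- is that $\Sigma$ is \emph{convex}; this is a non-commutative Horn-type lemma of moment-polytope flavour. I would prove it with the Klyachko--Friedland--Fulton machinery behind Theorems \ref{KlFF}, \ref{teo contrac sh} and \ref{teo par trac}: the tuples $(\lambda(B_i))_i$ with $B_i\ge0$, $\operatorname{rank}(B_i)\le d$, $\sum_iB_i=A$ are cut out by the linear compatibility inequalities \eqref{comp K} together with the rank conditions and trace identities, hence form a polytope, and the further passage to partial traces is again governed by the spectral (polytopal) descriptions of Theorem \ref{teo par trac} --- the same phenomenon that makes the $\mathbf l$-submajorization sets of Corollary \ref{mayo es espectral} spectrally describable. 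Granting this, the conclusion is immediate: for any $(v_i)_i\in\Sigma$ one has $\tfrac1m\sum_iv_i=\tfrac1m\Tr_m(A)$, so the average of the $m$ cyclic shifts of $(v_i)_i$ --- all in $\Sigma$ by $\mathbb Z_m$-invariance --- equals $\bigl(\tfrac1m\Tr_m(A),\dots,\tfrac1m\Tr_m(A)\bigr)\in\Sigma$. A projection system $\{Q_i\}$ realizing this point gives $B_i:=A^{1/2}Q_iA^{1/2}$ and then $X_i\in\mathcal M_{d\cdot m,\,d}(\CC)$ with $X_iX_i^*=mB_i$, which satisfy $A=\tfrac1m\sum_iX_iX_i^*$ and $\sum_jX_{ji}X_{ji}^*=\Tr_m(X_iX_i^*)=M$, as asserted.
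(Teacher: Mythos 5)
First, a point of order: the paper does not prove this statement. It is stated as Conjecture \ref{conj5}, an open problem from \cite{rusk}; the paper only establishes the weaker analogue Proposition \ref{Hlem} and explicitly remarks that the general case does not follow from it, the known cases being $d=1$ and $m=2$. So there is no ``paper proof'' to compare against, and a complete argument here would be a new result.

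Your reductions are sound and worth keeping: the reformulation in terms of $B_i=\tfrac1m X_iX_i^*$, the $\varepsilon$-perturbation to invertible $A$, and the observation that for invertible $A$ the admissible decompositions are exactly $B_i=A^{1/2}Q_iA^{1/2}$ for rank-$d$ projection systems $\{Q_i\}$ with $\sum_iQ_i=1_{dm}$ are all correct. But the entire proof then rests on the convexity of $\Sigma$, which you flag but do not prove, and this is precisely where the open content of the conjecture lives. The Klyachko--Friedland--Fulton machinery (Theorems \ref{KlFF}, \ref{teo contrac sh}, \ref{teo par trac}) describes which \emph{eigenvalue vectors} can occur; it says nothing about convexity of the set of matrix-valued partial traces $\bigl(\Tr_m(B_i)\bigr)_i$ as a subset of $(\mathcal M_d(\CC)^{sa})^m$. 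Indeed the paper's own Proposition \ref{no conv} shows that the analogous matrix-valued image $\C_\cP(\U_n(S))$ of a unitary orbit under a block compression is \emph{not} convex once a block has size $\geq 2$, and Theorem \ref{teo par trac} describes $\Tr_m(\U_{d\cdot m}(S))$ only as a union of unitary orbits $\{A:\lambda(A)\in D_S(d,m)\}$, which is typically non-convex (e.g.\ for $S$ a rank-one projection with $m<d$ one gets the rank-$\leq m$ density matrices). Since all you actually need is that the barycenter of the $m$ cyclic shifts of a point of $\Sigma$ again lies in $\Sigma$, and that barycenter is exactly the constant tuple the conjecture asks for, your argument reduces the conjecture to a statement essentially equivalent to it. The gap is not a technicality: it is the conjecture itself, repackaged.
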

The previous conjecture can be also expressed in terms of partial
traces. By the arguments in section \ref{lasec22}  we see that we
can replace \eqref{desc rusk} in Conjecture \ref{conj5} by
\begin{equation}\label{equiv conj5}
A=\frac{1}{m}\,\sum_{i=1}^m X_i X_i^* \ \ \text{ and } \ \
 \Tr_m(X_i\,X_i^*) = \Tr_m(A) , \  1\leq i\leq m.
\end{equation}

Conjecture \ref{conj5} is related with certain convex decompositions
of unital completely positive (UCP) maps between matrix algebras, in
terms of Choi matrices, that are of interest in QIT. The case $d=1$
is solved in \cite{rusk} using what is called ``Horn's lemma" namely,
that given $A\in \matpos$ with $\tr(A)=1$ there exist $U,\,B\in
\mat$ with $U$ unitary, $B$ with diagonal entries all equal to $1/n$
and $U^*A\,U=B$. The following result is an analogue of the above
Horn's lemma which leads to a related representation to that in
\eqref{desc rusk}. Still, while the convex decomposition that is
obtained using our result expresses a UCP map as an average of
completely positive maps with Choi rank at most $m$, these
representing maps may fail to be unital.

\begin{pro}[A non-commutative Horn's lemma]\label{Hlem}Let $A\in\mathcal
M_{d\cdot m}(\CC)^{+}$ be a block matrix $A=(A_{ij})_{i,\,j=1}^m$
with $A_{i\,j}\in \mathcal M_d(\CC)$ for $1\leq i,\,j\leq m$. Let
$\cP=\{P_i\}_{i=1}^m$ be a system of coordinate projections such
that rank$\,(P_i)=d$, $1\leq i\leq m$.
\begin{enumerate}
\item\label{hl1} There
exists $U\in \U(d\cdot m)$ and $D\in \mathcal M_{d}(\CC)^+$ such that
$$\C_\cP(U^*A\,U)=\frac{1}{m}\oplus_{i=1}^mD.$$
\item\label{hl2} There exist rectangular matrices $X_i\in \mathcal
M_{d\cdot m,\,d}(\CC)$, $X_i^*=(X_{1\, i}^*,\ldots, X_{m\,i}^*)$
with $X_{i\,j}\in \mathcal M_{d}(\CC)$ and unitary matrices $U_i\in
\U(d\cdot m)$  for $1\leq i,\,j\leq m$, such that
\begin{equation}\label{desc rusk weak1}
 A=\frac{1}{m} \,\sum_{i=1}^m X_i\, X_i^*
 \ \ \text{ and } \ \ U_i^* X_i\,X_i^*U_i=\oplus_{j=1}^m \delta_{ij}\, D\, ,
\end{equation}
\begin{equation}\label{desc rusk weak2}
  \ \ \text{ and hence } \ \ \Tr_m(U_i^* (X_iX_i^*)U_i)=\Tr_m(U^*AU),\ \   1\leq i\leq m.
 \end{equation}
\end{enumerate}
\end{pro}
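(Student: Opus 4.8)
The plan is to deduce both statements from the NC Schur-Horn theory already developed, applied to the trivial list $\mathbf l = (d,1)_{i=1}^m$ together with the uniform spectral target, and then to translate back into the language of partial traces and rectangular factorizations. The key observation is that item \eqref{hl1} is exactly a $\mathbf l$-majorization statement: writing $M = \Tr_m(A) = \sum_{i=1}^m A_{ii} \in \mathcal M_d(\CC)^+$ and setting $D := \frac{1}{m}M$ will \emph{not} in general work, so one must instead \emph{choose} $D$ so that $\frac1m \oplus_{i=1}^m D \in \C_\cP(\U_{d\cdot m}(A))$. By Theorem \ref{teo contrac sh} (applied with $S = A$, $S_i = \frac1m D$ for all $i$, and a unitary $V$, so that the contractive version specializes to the unitary one exactly as in the proof of Theorem \ref{teo KSH}), this is possible if and only if the $(m+1)$-tuple $(\lambda(A), (\frac1m\lambda(D),0),\ldots,(\frac1m\lambda(D),0))$ satisfies Klyachko's compatibility inequalities together with the trace equality $\tr(A) = m\cdot \frac1m\tr(D) = \tr(D)$. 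So the first task is to produce \emph{some} $D\in\mathcal M_d(\CC)^+$ with $\tr(D)=\tr(A)$ whose eigenvalue vector, repeated $m$ times and scaled by $1/m$, together with $\lambda(A)$, satisfies Klyachko's inequalities.

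The natural candidate is to build $D$ out of the diagonal of $A$ after an appropriate unitary conjugation — indeed this is precisely where the \emph{classical} Horn's lemma enters. First I would apply the classical Horn's lemma (as quoted in the discussion after Conjecture \ref{conj5}, suitably rescaled away from trace $1$) to find a unitary $W\in\U(d\cdot m)$ such that $W^*AW$ has all $d\cdot m$ diagonal entries equal to $\tr(A)/(d\cdot m)$. Then $\C_\cP(W^*AW)$ is a block-diagonal positive matrix whose $i$-th block $B_i\in\mathcal M_d(\CC)^+$ has trace $\tr(A)/m$; however the $B_i$ need not be equal or scalar. To fix this, note that each $B_i$ has constant diagonal $\tr(A)/(d\cdot m)$ on its $d\times d$ block, so applying the classical Schur-Horn theorem \emph{inside each block} — or rather, using that a positive matrix with constant diagonal can be unitarily conjugated within its block to a scalar times the identity only if it already is one — the cleaner route is: take $D := \frac{\tr(A)}{d}\,1_d$, i.e. aim for the fully uniform compression $\frac{1}{m}\oplus_{i=1}^m \frac{\tr(A)}{d}1_d = \frac{\tr(A)}{d\cdot m}1_{d\cdot m}$. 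Since any conditional expectation (here $\C_\cP \circ (\text{conjugation})$) preserves trace, and since $\frac{\tr(A)}{d\cdot m}1_{d\cdot m}$ is the image of $A$ under the full trace-preserving conditional expectation onto the scalars — which lies in $\C_\cP(\U_{d\cdot m}(A))$ precisely when $\lambda(A) \prec \frac{\tr(A)}{d\cdot m}\mathbf e^{d\cdot m}$, and this submajorization is automatic (the uniform vector is majorized by every vector with the same sum) — we get \eqref{hl1} with $D = \frac{\tr(A)}{d}1_d$, $U$ the realizing unitary.

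For item \eqref{hl2}, the plan is to run the equivalence \eqref{1}$\Leftrightarrow$\eqref{2} of Theorem \ref{teo contrac sh} in the forward direction starting from \eqref{hl1}. Applying that theorem with $S = A$ and $S_i = \frac1m D$ and the unitary $V = U$ from \eqref{hl1}, the proof of \eqref{1}$\Rightarrow$\eqref{2} there constructs $T_i = A^{1/2}U P_i$ with $T_i^* T_i = \oplus_j \delta_{ij}\frac1m D$ and $\sum_i T_i T_i^* \le A$; in fact here $\sum_i T_i T_i^* = A^{1/2}U(\sum_i P_i)U^*A^{1/2} = A$ exactly, since $\sum P_i = 1$. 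Setting $X_i := \sqrt m\, T_i = \sqrt m\, A^{1/2}UP_i \in \mathcal M_{d\cdot m,\,d}(\CC)$ (viewed as the $d\cdot m \times d$ matrix picking out the $i$-th block of columns) gives $\frac1m\sum_i X_i X_i^* = A$, the first identity in \eqref{desc rusk weak1}. For the second, $X_i X_i^* = m\, T_i T_i^*$ is unitarily equivalent (via the standard polar-decomposition argument: $TT^*$ and $T^*T$ are unitarily equivalent for any $T$) to $m\, T_i^* T_i = \oplus_j \delta_{ij} D$, which gives the unitary $U_i$ with $U_i^* X_i X_i^* U_i = \oplus_j \delta_{ij} D$. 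Finally \eqref{desc rusk weak2} is immediate: $\Tr_m(U_i^*(X_iX_i^*)U_i) = \Tr_m(\oplus_j \delta_{ij}D) = D$ by the formula $\Tr_m(C) = \sum_i C_{ii}$ from \eqref{defi alter trac}, while by \eqref{hl1} $\C_\cP(U^*AU) = \frac1m\oplus D$ so $\Tr_m(U^*AU) = \sum_i \frac1m D = D$ as well.

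The main obstacle I anticipate is the correct bookkeeping for the classical Horn's lemma in the rescaled, non-trace-one situation and verifying that the target $D = \frac{\tr(A)}{d}1_d$ genuinely satisfies the Klyachko inequalities required by Theorem \ref{teo contrac sh} — but as noted this reduces to the trivial submajorization of the uniform vector, so no deep input beyond the already-established Theorem \ref{teo contrac sh} (hence Klyachko-Friedland-Fulton) is needed. A secondary point of care is the identification of the rectangular matrix $X_i$ with the operator $A^{1/2}UP_i$: one must check that $X_i^* = (X_{1i}^*,\ldots,X_{mi}^*)$ with $X_{ji}\in\mathcal M_d(\CC)$ matches the block structure demanded in the statement, which is just the identification \eqref{la ident} applied to the columns indexed by the $i$-th coordinate block. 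None of these steps is more than routine once Theorem \ref{teo contrac sh} and the classical Horn lemma are in hand.
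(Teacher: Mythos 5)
Your derivation of item \eqref{hl2} from item \eqref{hl1} is fine (and a little more direct than the paper's, which invokes Theorem \ref{teo KSH} to produce the unitaries $U_i$ rather than reading them off from $T_iT_i^*\sim T_i^*T_i$ with $T_i=A^{1/2}UP_i$). The genuine gap is in item \eqref{hl1}: your choice $D=\frac{\tr(A)}{d}\,1_d$, i.e.\ the claim that one can always achieve $\C_\cP(U^*AU)=\frac{\tr(A)}{d\cdot m}1_{d\cdot m}$, is false for $d>1$. The justification you give --- that the scalar matrix lies in $\C_\cP(\U_{d\cdot m}(A))$ ``precisely when'' the uniform vector is majorized by $\lambda(A)$ --- is the classical Schur--Horn theorem for the \emph{diagonal} compression $\mathcal E_{\mathcal D}$ (blocks of size $1$), not for the block compression $\C_\cP$ with $d\times d$ blocks; a unitary conjugate of $A$ with constant diagonal entries need not have scalar diagonal blocks. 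Concretely, take $A$ a rank-one projection in $\mathcal M_{d\cdot m}(\CC)^{+}$ with $d\geq 2$: each block $P_iU^*AUP_i$ has rank at most one, so it can never equal $\frac{1}{d\cdot m}1_d$, which has rank $d$. (Equivalently, by Theorem \ref{KlFF} your target would require $A$ to be unitarily equivalent to $\frac{\tr(A)}{d\cdot m}\sum_{i=1}^m Q_i$ for rank-$d$ projections $Q_i$, forcing $\|A\|\leq \frac{\tr(A)}{d}$, which fails here. Proposition \ref{no conv} is precisely about this failure of the naive averaging picture once some block has size $\geq 2$.)

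The statement only asserts the existence of \emph{some} $D\in \mathcal M_d(\CC)^+$, and the paper produces it with no Klyachko-type input at all: diagonalize $A$, writing $W^*AW=\oplus_{i=1}^m D_i$ with each $D_i$ diagonal, and conjugate by the block Fourier unitary $V=(\tilde V_{ij}\cdot 1_d)_{i,j=1}^m$ built from a primitive $m$-th root of unity. Since $|\tilde V_{ki}|^2=1/m$ for all $k,i$, every diagonal $d\times d$ block of $V^*(\oplus_k D_k)V$ equals $\frac{1}{m}\sum_k D_k$, so $D=\sum_k D_k$ works. This $D$ is a diagonal matrix whose entries are sums of eigenvalues of $A$ grouped $m$ at a time; it is not scalar in general, and that is unavoidable. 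If you replace your construction of $D$ by this one, the remainder of your argument for \eqref{hl2} and \eqref{desc rusk weak2} goes through unchanged.
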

\begin{proof}
Note that \eqref{hl2} is a direct consequence of \eqref{hl1} and
Theorem \ref{teo KSH} (with $\alpha=0$). Indeed, if we assume \eqref{hl1} then,
there exist $U_i\in \mathcal \U(d\cdot m)$ for $1\leq i\leq m$ such
that
$$A=\frac{1}{m}\sum_{i=1}^m U_i \,(\oplus_{j=1}^m \delta_{ij} \, D) \, U_i^*
=\frac{1}{m} \sum_{i=1}^m X_i\,X_i^*$$ where
$$X_i^*=(D^{1/2}(U^{(i)}_{1i})^*,\ldots,D^{1/2} (U^{(i)}_{mi})^*),$$
with $U_i=(U^{(i)}_{lk})_{l,k=1}^m$ and $U^{(i)}_{lk}\in \mathcal
M_d(\CC)$.

 To prove \eqref{hl1} consider first
$\xi\in \CC$ an $m$-th primitive root of unity and let $\tilde V\in
\mathcal M_m(\CC)$ be the matrix with $j$-th row given by
$$R_j(\tilde
V)=1/\sqrt{m}\,(1,\,\xi^j,\,\xi^{2j},\ldots,\xi^{(m-1)j})\,,\ \
1\leq j\leq m.$$ It is then straightforward to show that the rows of
$\tilde V$ form an orthonormal basis for $\CC^m$ and hence $\tilde
V\in \U(m)$ is a unitary matrix. Let $V\in \U(d\cdot m)$ be the
block matrix $V=(\tilde V_{ij}\cdot 1_d)_{i,j=1}^m$. If $W\in
\U(d\cdot m)$ is such that $W^*AW=\oplus_{i=1}^m D_i$  where
$D_i\in \mathcal M_{d}(\CC)$ is a diagonal matrix $1\leq i\leq m$,
define
$U:=VW$ and note that
$$ \C_\cP((WV)^*A(WV))=\C_\cP(V^*(\oplus_{i=1}^m
D_i)V)=\frac{1}{m}\, \oplus_{i=1}^m (\sum_{j=1}^m D_j)$$
The last equality follows from the block structure of
$\oplus_{i=1}^m D_i $ and by construction of $V$. Thus, we define
$D:=\sum_{j=1}^m D_j$.
\end{proof}
Note that the particular case $d=1$ of Conjecture \ref{conj5}
follows from Proposition \ref{Hlem}, since $\Tr_m=\tr$ in this case.
But we remark that the general case of Conjecture \ref{conj5} does
not follow from Proposition \ref{Hlem}, since the equation
$\Tr_m(U^*A\,U)=\Tr_m(U^*_i X_i\, X_i^* \,U_i)$ does not imply (for
$d> 1$) that $\Tr_m(X_iX_i^*)=\Tr_m(A)$. This is a consequence of
the non-commutativity of the values of $\Tr_m$.

 Also notice that the matrix $D$ above is not
unique. Moreover, there does not seem to be a canonical choice of
$D$ in general. Hence, if we let $\mathbf d=(d,\ldots,d)\in \RR^m$,
it is not clear whether there is in general a minimum (up to unitary
equivalence) with respect to $\mathbf d$-majorization of the set $\{
A\in \mathcal M_{d\cdot m}(\CC)^{+}:\ \Tr_m(A)=1\}.$

\begin{rem} It is worth noting that the case $m=2$ of the conjecture 5 in
\cite{rusk} has been proved (see \cite{rusk}). But the ideas
involved in the proof are related with the off-diagonal
blocks of the 2$\times$2 representation of $A$.\end{rem}

\smallskip

\noindent Pedro Massey \\ Departamento de Matem\'atica - FCE, UNLP, \\
Instituto Argentino de Matem\'atica, Argentina and\\
Dept. of Mathematics and Statistics - U of R, Regina, Canada.


\begin{thebibliography}{99}
\bibitem{Ando} T. Ando, {\it Majorization, doubly stochastic
matrices and comparison of eigenvalues}, Lecture Notes, Hokkaido
Univ., 1982.

\bibitem{JD} J. Antezana, P. Massey, and D. Stojanoff,
{\it Jensen's Inequality and Majorization}, J. Math. Anal. Appl {\bf
331} (2007) 297-307.


\bibitem{JMD}J. Antezana, P. Massey, M. Ruiz, and D. Stojanoff,
{\it The Schur-Horn theorem for operators and frames with prescribed
norms and frame operator}, Illinois J. of Math, in press;
arXiv:math.FA/0508646.

\bibitem{argmas} M. Argerami and P. Massey, {\it A Schur-Horn theorem for II$_1$ factors},
Indiana Univ. Math. J., in press.


\bibitem{arvkad}W. Arveson and R. Kadison, {\it Diagonals of
self-adjoint operators}, In D. R. Larson D. Han, P. E. T. Jorgensen,
editor, Operator theory, operator algebras and applications,
Contemp. Math. Amer. Math. Soc., 2006 arXiv:math.OA/0508482 v2.

\bibitem{Ati} M. Atiyah, {\it Convexity and commuting Hamiltonians},
Bull. L.M.S., {\bf 14}(1) (1982) 1-15.

\bibitem{bhat} R. Bhatia, {\it Partial traces and entropy
inequalities}, Linear Algebra Appl. {\bf 370} (2003) 125-132.

\bibitem{bou} J.S. Aujla, J.C. Bourin, {\it Eigenvalue inequalities
for convex and log-convex functions}, Linear Algebra Appl. {\bf 424}
(2007) 25-35.

\bibitem{sil} J.S. Aujla, F.C. Silva, {\it Weak
majorization inequalities and convex functions}, Linear Algebra
Appl. {\bf 369} (2003) 217-233.


\bibitem{dhi} I.S. Dhillon, R.W. Heath Jr.,M.A. Sustik, J.A. Tropp, {\it
 Generalized finite algorithms for constructing Hermitian
 matrices with prescribed diagonal and spectrum} SIAM J. Matrix Anal. Appl.
 {\bf 27} (2005) 61-71.

\bibitem{fried} S. Friedland, {\it
 Finite and infinite dimensional generalizations of Klyachko's
 theorem}, Linear Algebra
Appl. {\bf 319} (2000) 3-22.

\bibitem{ful} W. Fulton, {\it
Eigenvalues of majorized Hermitian matrices and
Littlewood-Richardson coefficients}, Linear Algebra Appl. {\bf 319}
(2000) 23-36.

\bibitem{ful2} W. Fulton, {\it Eigenvalues, invariant factors, highest weights, and
Schubert calculus}, Bull. Amer. Math. Soc. {\bf 37} (2000) 209-249,
math.AG/9908012.

\bibitem{GS} V. Guillemin and S. Sternberg {\it Convexity properties of the moment
mapping}, Invent. Math., {\bf 67} (3) (1982) 491-513.

\bibitem{Horn54}A. Horn. {\it Doubly stochastic matrices and the diagonal of a rotation matrix}. Amer.
J. Math., {\bf 76}(3) (1954), 620-630.

\bibitem {horn} A. Horn, {\it Eigenvalues of sums of Hermitian matrices}, Pacific J.
Math. {\bf 12} (1962) 225-241.

\bibitem{Kad0} R. Kadison,
{\it The Pythagorean Theorem: I. The finite case},
PNAS {\bf 99}(7), 2002, 4178-4184.

\bibitem{Kad1} R. Kadison,
{\it The Pythagorean Theorem: II. The infinite discrete case},
PNAS {\bf 99}(8), 2003, 5217-5222.

\bibitem{tao} A. Knutson, T. Tao, {\it The honeycomb model of GLn.C/ tensor products I: Proof of the saturation
conjecture}, J. Amer. Math. Soc. {\bf 12} (1999) 1055-1090,
math.RT/9807160.

\bibitem{Klya} A.A. Klyachko, {\it Stable bundles,
representation theory and Hermitian operators}, Selecta Math. {\bf
4} (1998) 419-445.

\bibitem{Kons} B. Konstant, {\it On convexity, the Weyl group and the Iwasawa
decomposition}, Ann. Scient. \'Ec. Norm. Sup., {\bf 6}: (1973)
413-455.

\bibitem{leite} R.S. Leite, T.R.W. Richa and C. Tomei,
{\it Geometric proofs of some theorems of Schur-Horn type}, Linear
Algebra Appl. {\bf 286} (1999) 149-173.

\bibitem{LiPoon} C.K. Li and Y.T. Poon,{\it  Principal submatrices of a
Hermitian matrix},  Linear Multilinear Algebra  {\bf 51}(2) (2003), 199-208.

\bibitem{FPL}F. Martínez Pería, P. Massey and L. Silvestre,{\it Weak matrix majorization},  Linear Algebra Appl.  {\bf 403}  (2005), 343-368. 

\bibitem{RM} P. Massey and M. Ruiz, {\it Some convex functionals over frame
operators}, preprint.

\bibitem{rusk} M.B. Ruskai, {\it Some open problems in QIT},
preprint.

\bibitem{Schur23}I. Schur, {\it \"Uber eine klasse von mittlebildungen mit anwendungen auf der determinantentheorie},
{Sitzungsber. Berliner Mat. Ges.}, {\bf 22} (1923), 9-29.

\end{thebibliography}
\end{document}